\documentclass[12pt]{amsart}

\usepackage[utf8]{inputenc}
\usepackage{amsmath}
\usepackage{amsfonts}
\usepackage{amssymb}
\usepackage{amsthm}
\usepackage{float}
\usepackage{mathtools}
\usepackage{csquotes}
\usepackage[numbers]{natbib}
\usepackage{pdfpages}

\RequirePackage{doi}

\usepackage[margin=3cm]{geometry}

\emergencystretch=.5em
\usepackage{tikz-cd}
\usetikzlibrary{arrows,matrix,shapes,decorations.text, mindmap,shapes.misc}
\usepackage{metalogo}
\newcommand{\C}{\mathbb{C}}

\newcommand{\T}{\mathbb{T}}
\renewcommand{\S}{\mathbb{S}}
\renewcommand{\P}{\mathbb{P}}
\newcommand{\R}{\mathbb{R}}

\newcommand{\ip}[2]{\left\langle #1,#2\right\rangle}

\newcommand{\ehn}{\mathcal{O}_{\C\P^{n-1}}(-n)}

\newcommand{\oa}{\overline{\alpha}}
\newcommand{\om}{\overline{\mu}}

\newtheorem{Thm}{Theorem}

\newtheorem{Lem}[Thm]{Lemma}
\newtheorem{Cor}[Thm]{Corollary}
\newtheorem{Prop}[Thm]{Proposition}
\usepackage{hyperref}
\hypersetup{colorlinks=true, linkcolor=violet, citecolor=blue}
\theoremstyle{definition}
\newtheorem{Def}[Thm]{Definition}

\newtheorem{Rem}[Thm]{Remark}

\usepackage[toc,page]{appendix}
\usepackage{dsfont}
\usepackage{authblk}

\usepackage[foot]{amsaddr}

\makeatletter
\@namedef{subjclassname@2020}{%
  \textup{2020} Mathematics Subject Classification}
\makeatother

\title{A detailed look at the Calabi-Eguchi-Hanson spaces}
\author{J\o rgen Olsen Lye}
\address{Institut f\"{u}r Differentialgeometrie, Leibniz Universit\"at Hannover}
\email{joergen.lye@math.uni-hannover.de}
\begin{document}
\pagestyle{plain}

\subjclass[2020]{Primary 32Q25; Secondary 53C42, 53C25\\ \indent \keywordsname : Calabi-Yau, K\"{a}hler-Einstein ALE, Minimal submanifolds.}

\phantomsection

\maketitle

\begin{abstract}
This article takes a detailed look at the Ricci-flat metrics introduced by Eguchi-Hanson and Calabi on the canonical line bundle of complex projective space. We give a description of these spaces as resolutions of certain orbifold singularities. We then compute the curvature explicitly and show that all compact, minimal submanifolds are contained in the zero section. This extends a result by Tsai and Wang. 
\end{abstract}

\section{Introduction}
Finding Einstein (pseudo-) Riemannian metrics has been an interesting project for more than a century. All the early successes rely on assuming a high degree of symmetry, reducing the Einstein equations to an ODE. This was successfully carried out by Eguchi and Hanson in \cite{EH}, allowing them to find a Ricci-flat, nowhere flat metric locally defined on $\R^4\setminus \{0\}$, which in fact is a complete hyperk\"{a}hler metric defined on the total space of the cotangent bundle $T^*\C\P^1=\mathcal{O}_{\C\P^{1}}(-2)$ of $\C\P^1$. The fact that the a priori real metric is K\"{a}hler and defined on a certain line bundle was quickly seen by Calabi \cite{Cal79} to not be a coincidence.   Calabi \cite{Cal79} then proceeded to generalize the Eguchi-Hanson metrics to two families of spaces, giving explicit examples of complete hyperk\"{a}hler metrics on $T^*\C\P^n$ and complete Ricci-flat K\"{a}hler metrics on $\mathcal{O}_{\C\P^{n-1}}(-n)$. We call the latter family Calabi-Eguchi-Hanson spaces in this article, even though the first family would also deserve this name. The Calabi construction was taken up again by R. Bryant and S. Salamon \cite{Bryant} and M. Stenzel \cite{Stenzel}, and they introduced Ricci-flat metrics on certain spin bundles and on $T^*\S^n$ respectively. In the Stenzel case $T^*\S^n$, the metrics are K\"{a}hler metrics.

A common feature of all these special holonomy spaces is that they are the total spaces of certain vector bundles over compact manifolds ($\S^n$ or $\C\P^n$, to be precise). These constructions were taken up again by B. Feix \cite{Feix} and D. Kaledin \cite{Kaledin} for arbitrary complex spaces $X$. They both independently showed that no matter which analytic metric on puts on $X$, one can extend it to hyperk\"{a}hler metric defined on a neighbourhood of the zero section of $T^*X$, but one cannot hope for completeness. In \cite{Bielawski} R. Bielawski proved that for any real analytic K\"{a}hler manifold $X$, one can extend the metric to be a Ricci-flat K\"{a}hler metric in a neighbourhood of the zero section in the canonical line bundle $\mathcal{K}_X$. This extends Calabi's second family of examples, $\mathcal{O}_{\C\P^{n-1}}(-n)=\mathcal{K}_{\C\P^{n-1}}$, but one cannot in general hope for completeness.

Apart from being explicit examples of non-flat, Ricci-flat spaces, the Calabi-Eguchi-Hanson spaces provide simple but interesting examples of instantons and asymptotically locally Euclidean (ALE) spaces. For $n=2$ they enter as crucial building blocks of Kummer K3 surfaces \cite{Page,GibbonsPope,Kob}, which has a generalization to $n=3$ \cite{Lu}. In both cases, one glues in copies of Calabi-Eguchi-Hanson spaces into a singular quotient of a torus. One equips the Calabi-Eguchi-Hanson spaces with the Ricci-flat metrics we will discuss in this article, which one glues together with a flat metric on the torus. If the parameter in the Calabi-Eguchi-Hanson metric is small enough, this procedure produces a compact, almost Ricci-flat Calabi-Yau manifold. The Ricci-flat metric of Yau \cite{Yau78} can be shown \cite{Kob,Lu} to be close to the given almost Ricci-flat metric. 


The aims of this article are twofold. The first goal is to present an interpretation of the spaces $\mathcal{O}_{\C\P^{n-1}}(-k)$ as smooth resolutions of $\C^n/\mu_k$, where $\mu_k$ denotes the $k$'th roots of unity acting diagonally. The second goal is to give a more detailed analytic description of the Ricci-flat Calabi-Eguchi-Hanson metric. We also include an addition to \cite[Theorem 5.3]{MeanCurv}. Much of the material is known in some form to the experts, but seems to be lacking a clear and explicit exposition. We hope the present article can serve as a gentle introduction to these fascinating spaces.

Calabi's deduction of the Ricci-flat metric in \cite{Cal79} is a bit different. He instead works directly on the bundle $p\colon \ehn\to \C\P^{n-1}$. On the trivializing sets $U_i$, he makes an ansatz for a K\"{a}hler potential on $\ehn_{\vert U_i}$ of the form
\[\Phi_i\coloneqq p^*\phi_{FS}+v(t),\]
where $\phi_{FS}$ is the Fubini-Study K\"{a}hler potential on $U_i\subset \C\P^{n-1}$, $t=\vert y\vert^2_{g_{FS}}$ is the norm squared of an element in the fibre, and $v$ is an unknown scalar function. Imposing Ricci-flatness, Calabi finds an ODE for $v$ which he explicitly solves. This leads to the metric expression \eqref{eq:EHPullback}.

Calabi's approach makes the completeness of the metric and the role of the Fubini-Study metric much more transparent. What we do in this paper is to start looking for rotationally symmetric K\"{a}hler potentials on $\C^n\setminus \{0\}$. These of course descend to quotients of $\C^n\setminus \{0\}$ by $\mu_k$ for any $k$, and extend to Ricci-flat metrics on $\mathcal{O}_{\C\P^{n-1}}(-k)$ if and only if $k=n$. These Ricci-flat metrics are exactly the same as the ones found by Calabi. We feel that the deduction presented in this paper offers a fruitful complementary access to these metrics.

\subsection{Statement of the results}
Definitions and proofs of the following results are given in the main text.

\noindent
\textbf{Notational:} 
We deal with K\"{a}hler manifolds in this article, and we will employ complex notation for tensors throughout. Good introductions can be found in \cite[Chapter IX.5]{KoNo} and \cite[p. 41 \& Chapter 4.2-4.4]{Kahler}.

Throughout the article, we will use $u\coloneqq \vert z\vert^2_{\C^n}$. Indices on $z\in \C^n$ are raised and lowered using the Euclidean metric. In particular, $\bar{z}_\mu\coloneqq \bar{z}^\nu \delta_{\mu\nu}$ etc. 
Let $\mu_k$ denote the group of $k$'th roots of unity. Consider the diagonal $\mu_k$ action on $\C^n$, 
\begin{equation}
\C^n \times \mu_k\to \C^n
\notag
\end{equation} 
given by 
\begin{equation}
(z,\zeta)\mapsto \zeta z.
\notag
\end{equation}
\begin{Thm}
\label{Thm:EH-Alg-Setup}
 For $k,n\geq 2$ the quotient $\C^n/\mu_k$ is singular, but the blow-up is a resolution 
\begin{equation}
\mathcal{O}_{\C\P^{n-1}}(-k)\cong Bl_0(\C^n/\mu_k)\cong Bl_0(\C^n)/\mu_k =\mathcal{O}_{\C\P^{n-1}}(-1)/\mu_k.
\end{equation}

For $n=k=2$, the space $\mathcal{O}_{\C\P^{1}}(-2)$ is the minimal resolution of $\C^2/\mu_2$.
\end{Thm}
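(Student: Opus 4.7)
The plan is to verify the chain of isomorphisms from right to left, and then address smoothness and the minimality statement in dimension two.

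First, I would recall that the blow-up $Bl_0(\C^n)$ is, by construction, the tautological line bundle $\mathcal{O}_{\C\P^{n-1}}(-1)$: a point consists of a line $\ell \subset \C^n$ through the origin together with a vector on that line, and the blow-down map forgets $\ell$. The diagonal $\mu_k$-action on $\C^n$ preserves each line, so it lifts to an action on $Bl_0(\C^n)$ that is trivial on the base $\C\P^{n-1}$ and is standard scalar multiplication on each fiber. This already explains the rightmost equality $Bl_0(\C^n)/\mu_k = \mathcal{O}_{\C\P^{n-1}}(-1)/\mu_k$ as a set-theoretic statement.

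Next, I would identify this quotient as a line bundle over $\C\P^{n-1}$. On the standard trivialising cover $\{U_i\}$ with fibre coordinates $y_i$ satisfying $y_j = (z_i/z_j)\, y_i$, the $\mu_k$-invariant functions along each fibre are generated by $w_i := y_i^k$, and these satisfy $w_j = (z_i/z_j)^k w_i$, which are precisely the transition functions of $\mathcal{O}_{\C\P^{n-1}}(-k)$. This simultaneously shows that the quotient is a smooth manifold: away from the zero section $\mu_k$ acts freely, while at the zero section the action acts trivially on the base directions and by scalar multiplication on a single complex fibre direction, so the local model is $(\C^{n-1} \times \C)/\mu_k \cong \C^{n-1}\times \C$ via $(x,y)\mapsto (x,y^k)$.

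To obtain $Bl_0(\C^n)/\mu_k \cong Bl_0(\C^n/\mu_k)$, I would first observe that the origin is the only fixed point of the $\mu_k$-action on $\C^n$ (this simultaneously proves singularity: the fixed point cannot be smooth, since the action on the tangent space is nontrivial by a finite group acting linearly with no pseudo-reflections when $n\geq 2$, which by Chevalley--Shephard--Todd makes the invariant ring non-regular). Away from $0$ the projection $\C^n\setminus\{0\}\to (\C^n\setminus\{0\})/\mu_k$ is an unramified cover, so the exceptional divisor of $Bl_0(\C^n)/\mu_k \to \C^n/\mu_k$ is the image of $\C\P^{n-1}\subset Bl_0(\C^n)$, which is again $\C\P^{n-1}$. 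One then either invokes the universal property of the blow-up (the map is a proper birational morphism from a smooth variety that is an isomorphism outside $0$) or verifies directly in charts that the pullback of the maximal ideal at the singular point is invertible.

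Finally, for $n = k = 2$, the exceptional divisor is the zero section $\C\P^1\subset \mathcal{O}_{\C\P^1}(-2)$, which is a smooth rational curve of self-intersection $-2$. Minimality among resolutions of the surface singularity $\C^2/\mu_2$ then follows from Castelnuovo's criterion, as the resolution contains no $(-1)$-curve that could be blown down.

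The main obstacle I anticipate is the careful identification of $Bl_0(\C^n)/\mu_k$ with $Bl_0(\C^n/\mu_k)$: since the latter is the blow-up of a singular variety, one needs either a clean universal-property argument or an explicit local computation in affine charts of the blow-up to show that the $\mu_k$-quotient dominates the correct ideal. The remaining steps, in particular the line-bundle identification via transition functions, are essentially routine once the right coordinates are set up.
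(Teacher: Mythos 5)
Your proposal is correct, and its core step --- lifting the diagonal $\mu_k$-action to $Bl_0(\C^n)=\mathcal{O}_{\C\P^{n-1}}(-1)$, noting it is trivial on the base and scalar on the fibres, and identifying the quotient with $\mathcal{O}_{\C\P^{n-1}}(-k)$ by taking $k$-th powers of the fibre coordinate in each trivialization --- is exactly the paper's argument (the paper writes the chart map $f_i(w^1,\dots,w^n)=((w^1)^k,\dots,(w^n)^k)$ slot-by-slot rather than phrasing it via transition functions, but it is the same computation, and it likewise records the local smoothness and that the exceptional set is a copy of $\C\P^{n-1}$). You diverge in three smaller places, each legitimately. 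For the singularity of $\C^n/\mu_k$ you invoke Chevalley--Shephard--Todd (no nontrivial element is a pseudo-reflection since each fixes only the origin and $n\geq 2$), whereas the paper argues topologically: any neighbourhood of the image of $0$ contains the boundary of an orbiball, a lens space of real dimension $2n-1$ that cannot embed in codimension one, so the quotient is not even a topological manifold there; your route is more algebraic, the paper's is more elementary. For minimality when $n=k=2$ you observe the exceptional curve is an irreducible $(-2)$-curve and quote the no-$(-1)$-curve criterion, while the paper instead deduces the absence of $(-1)$-curves from triviality of the canonical bundle of $\mathcal{O}_{\C\P^1}(-2)$ obtained via adjunction; both are standard. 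Finally, you rightly flag the identification $Bl_0(\C^n/\mu_k)\cong Bl_0(\C^n)/\mu_k$ as the step needing the most care: the paper does not actually verify this isomorphism (it proves that $\mathcal{O}_{\C\P^{n-1}}(-1)/\mu_k\cong\mathcal{O}_{\C\P^{n-1}}(-k)$ is a resolution of $\C^n/\mu_k$ and leaves the comparison with the blow-up of the quotient implicit), so carrying out your universal-property or explicit-chart verification would make the argument more complete than the published one rather than merely matching it.
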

\begin{Rem}
This result is well-known to the experts, but we do not know of any reference for all $k,n\geq 2$.
\end{Rem}
 
 \begin{Thm}[\cite{EH},\cite{Cal79}]
\label{Thm:EHMetric}
The spaces ${O}_{\C\P^{n-1}}(-k)$ admit Ricci-flat metrics if and only if $k=n$, in which case one can equip them with an asymptotically flat, Ricci-flat metric whose expression on $\mathcal{O}_{\C\P^{n-1}}(-n)\setminus \C\P^{n-1}\cong \left(\C^n\setminus \{0\}\right)/\mu_n$ is given by
\begin{equation}
g_{\mu\bar{\nu}}=\sqrt[n]{1+\frac{a^n}{u^n}}\left(\delta_{\mu\bar{\nu}}-\frac{a^n}{a^n+u^n} \frac{\bar{z}_\mu z_\nu}{u}\right),
\label{eq:GeneralEHMetric}
\end{equation}
for some parameter $a>0$.
The associated K\"{a}hler potential reads
\begin{equation}
f(u)=\sqrt[n]{a^n+u^n}+\frac{a}{n}\sum_{j=0}^{n-1} \zeta^j \log\left( \sqrt[n]{1+\frac{u^n}{a^n}}-\zeta^j\right)+const.,
\label{eq:GeneralEHPotential}
\end{equation}
 where $\zeta$ is a primitive $n$'th root of unity.
Restricting this Ricci-flat metric to the zero section $\C\P^{n-1}\subset \mathcal{O}_{\C\P^{n-1}}(-n) $ gives
\begin{equation}
ds^2_{\vert \C\P^{n-1}}=a\cdot  ds^2_{FS},
\label{eq:InducesFS}
\end{equation}
where $ds^2_{FS}$ is the Fubini-Study metric on $\C\P^{n-1}$, given in homogeneous coordinates $[z^1:\dots :z^n]$ by 

\begin{equation}
ds^2_{FS}\coloneqq \left(\frac{\vert dz \vert^2}{\vert z\vert^2}-\frac{\vert \bar{z}\cdot dz\vert^2}{\vert z\vert^4}\right).
\label{eq:GeneralFSMetric}
\end{equation}

\end{Thm}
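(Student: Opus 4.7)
My plan is to follow the strategy sketched in the introduction. I would start with a rotationally symmetric K\"ahler potential $\phi(z,\bar z)=f(u)$ on $\C^n\setminus\{0\}$, impose Ricci-flatness to reduce the problem to a single ODE, integrate that ODE explicitly, and then determine for which $k$ the resulting metric extends smoothly and non-degenerately across the zero section of $\mathcal{O}_{\C\P^{n-1}}(-k)$. The first computation is
\[g_{\mu\bar\nu}=\partial_\mu\partial_{\bar\nu}\phi=f'(u)\,\delta_{\mu\bar\nu}+f''(u)\,\bar z_\mu z_\nu,\]
whose eigenvalues are $f'(u)$ with multiplicity $n-1$ and $f'(u)+uf''(u)$ with multiplicity one, so
\[\det g=f'(u)^{n-1}\bigl(f'(u)+u f''(u)\bigr)=\tfrac{1}{n}\tfrac{d}{du}\bigl(u\,f'(u)^n\bigr).\]
Ricci-flat K\"ahler is equivalent to $\partial\bar\partial\log\det g=0$; a pluriharmonic function depending only on $u$ is constant, so I would normalise $\det g\equiv 1$. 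Integrating yields $u\,f'(u)^n=u+a^n$, hence $f'(u)=\sqrt[n]{1+a^n/u^n}$. Setting $h:=f'(u)$ in $g_{\mu\bar\nu}=h\,\delta_{\mu\bar\nu}+h'\,\bar z_\mu z_\nu$ and using $h'/h=-a^n/(u(u^n+a^n))$ then produces \eqref{eq:GeneralEHMetric} directly.

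To integrate $f'$ I would substitute $w=\sqrt[n]{1+u^n/a^n}$, which converts $f'(u)\,du$ into $a\,w^n/(w^n-1)\,dw=a\bigl(1+1/(w^n-1)\bigr)dw$. The partial-fraction identity
\[\frac{1}{w^n-1}=\frac{1}{n}\sum_{j=0}^{n-1}\frac{\zeta^j}{w-\zeta^j},\]
for $\zeta$ a primitive $n$th root of unity, then reproduces \eqref{eq:GeneralEHPotential} after noting $aw=\sqrt[n]{a^n+u^n}$.

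The heart of the statement is the ``if and only if'' clause for global existence. For the ``only if'' direction: any Ricci-flat K\"ahler metric on $\mathcal{O}(-k)$ forces $c_1=0$ via its Ricci form, and the line-bundle computation $K_{\mathcal{O}(-k)}\cong p^*\mathcal{O}(k-n)$ singles out $k=n$. For the ``if'' direction, I would split $f(u)=a\log u+F(u^n)$ near $u=0$; a short expansion shows $F$ is analytic near zero with $F'(0)=1/(n^2 a^{n-1})\neq 0$. In trivialising fibre coordinates $(\xi,\tau)$ on the blow-up from Theorem \ref{Thm:EH-Alg-Setup}, one has $u=\vert\tau\vert^{2/k}(1+\vert\xi\vert^2)$, so the logarithmic piece of $f$ becomes $(a/k)\log\vert\tau\vert^2+a\log(1+\vert\xi\vert^2)$; the first summand is pluriharmonic away from $\tau=0$ and contributes nothing to $i\partial\bar\partial f$, while the second is the Fubini-Study potential up to scale. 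The analytic part $F(u^n)=F\bigl(\vert\tau\vert^{2n/k}(1+\vert\xi\vert^2)^n\bigr)$ is smooth in $(\xi,\tau)$ precisely when $n/k\in\Z_{>0}$, and the computation $\partial_\tau\partial_{\bar\tau}\vert\tau\vert^{2m}=m^2\vert\tau\vert^{2m-2}$ shows that it contributes non-degenerately in the fibre direction at $\tau=0$ only when $n/k=1$; both conditions converge on $k=n$. At $\tau=0$ the analytic part reduces to the constant $F(0)$, so the induced metric on the zero section is $i\partial\bar\partial\bigl(a\log(1+\vert\xi\vert^2)\bigr)=a\cdot ds^2_{FS}$, yielding \eqref{eq:InducesFS}. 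Asymptotic flatness follows from the expansion $f'(u)=1+O(u^{-n})$ and the explicit bound $\vert g-\delta\vert=O(u^{-n})$ as $u\to\infty$. The main obstacle I expect is the chart bookkeeping: verifying the smooth extension globally across overlaps of the trivialising charts of $\mathcal{O}(-n)$, and handling the multi-valued fractional powers of the fibre coordinate that appear both in transition maps and in the relation $u=\vert\tau\vert^{2/k}(1+\vert\xi\vert^2)$.
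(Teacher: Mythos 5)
Your proposal is correct in substance and follows the paper's overall strategy (rotationally symmetric potential, reduction to the condition $\det g=\mathrm{const}$, explicit integration of the resulting ODE, and a first-Chern-class obstruction for $k\neq n$), but it diverges from the paper in two places worth recording. First, you integrate $f'$ forwards via the substitution $w=\sqrt[n]{1+u^n/a^n}$ and the partial-fraction identity; the paper (Lemma \ref{Lem:Ricci-FlatMetric}) instead differentiates the claimed potential \eqref{eq:GeneralEHPotential} and verifies it using that same identity, so your derivation is the more self-contained of the two. Second, and more substantially, you treat the extension across the zero section at the level of the K\"ahler potential, splitting $f(u)=a\log u+F(u^n)$ with $F$ analytic at $0$ and $F'(0)=1/(n^2a^{n-1})$, so that in fibre coordinates the logarithmic piece decomposes into a pluriharmonic term plus $a$ times the Fubini--Study potential. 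The paper instead pulls back the metric tensor itself through the blow-down map and exhibits the explicit expression \eqref{eq:EHPullback} (Lemma \ref{Prop:EHInducesFS}). Your route is shorter, and delivers \eqref{eq:InducesFS} together with the failure of extension for $k\neq n$ (within the symmetric family) in one stroke; the paper's route produces a closed form for the pulled-back metric that it reuses later. Both are complete arguments, provided you do carry out the chart bookkeeping you flag at the end.

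One slip to fix: the displayed identity $\det g=f'(u)^{n-1}\bigl(f'(u)+uf''(u)\bigr)=\tfrac{1}{n}\tfrac{d}{du}\bigl(u\,f'(u)^n\bigr)$ is false, and the intermediate equation $u\,f'(u)^n=u+a^n$ obtained by integrating it is inconsistent with the $f'$ you then write down (it would give $f'=\sqrt[n]{1+a^n/u}$). The correct manipulation, as in the paper, is
\begin{equation}
\det g=f'(u)^{n-1}\bigl(uf'(u)\bigr)'=\frac{1}{n\,u^{n-1}}\,\frac{d}{du}\Bigl(\bigl(uf'(u)\bigr)^n\Bigr),
\notag
\end{equation}
so that $\det g\equiv 1$ integrates to $\bigl(uf'(u)\bigr)^n=u^n+a^n$, i.e.\ $f'(u)=\sqrt[n]{1+a^n/u^n}$. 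Your final formula for $f'$ is the right one, but it does not follow from the identity as you wrote it.
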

 
\begin{Rem}
For $n=2$, the potential \eqref{eq:GeneralEHPotential} coincides with the one derived in \cite{EH}. According to \cite[Equation 5]{Joyce}, \eqref{eq:GeneralEHPotential} coincides with \cite[Equation 4.14]{Cal79}. 
\end{Rem}

\begin{Rem}
The spaces $\mathcal{O}_{\C\P^{n-1}}(-n)$ with metric given by \eqref{eq:GeneralEHMetric} are examples of ALE (asymptotically locally Euclidean) spaces, and Theorem \ref{Thm:EH-Alg-Setup} says that they are resolutions of $\C^n/\mu_n$. 

For $n=2$, \cite{Kronheimer1} and \cite{Kronheimer2} shows that \textit{any} ALE hyperk\"{a}hler manifold is the resolution on $\C^2/\Gamma$ for some finite subgroup $\Gamma\subset SU(2)$. The Eguchi-Hanson space $\mathcal{O}_{\C\P^1}(-2)$ is in this sense the simplest, non-trivial hyperk\"{a}hler ALE space. 
\end{Rem} 
 
\begin{Prop}
\label{Prop:EHCurvature}
Choose standard coordinates on $\C^n\setminus \{0\}$. Then the Christoffel symbols associated to the metric \eqref{eq:GeneralEHMetric} take the form
\begin{equation}
\Gamma_{\mu\alpha}^\lambda =-\frac{a^n}{u(a^n+u^n)}\left(\bar{z}_\mu \delta^\lambda{}_\alpha+\bar{z}_\alpha \delta^\lambda{}_\mu -(n+1)\frac{\bar{z}_\alpha\bar{z}_\mu}{u}z^\lambda\right)
\label{eq:GeneralChristoffel}
\end{equation}
The curvature tensor of \eqref{eq:GeneralEHMetric} reads
\begin{align}
R_{\mu\bar{\nu}\alpha\bar{\beta}}&=\frac{a^n}{(a^n+u^n)^\frac{n+1}{n}} \Bigg(g_{\alpha\bar{\nu}}g_{\mu\bar{\beta}}+g_{\mu\bar{\nu}}g_{\alpha\bar{\beta}}\notag \\
&-(n+1)\left(\frac{u}{\sqrt[n]{a^n+u^n}}\right)^{n-1}\left(\frac{\bar{z}_\mu z_\beta}{u}g_{\alpha\bar{\nu}}+\frac{\bar{z}_\alpha z_\beta}{u}g_{\mu\bar{\nu}}+\frac{\bar{z}_\mu z_\nu}{u}g_{\alpha\bar{\beta}}+\frac{\bar{z}_\alpha z_\nu}{u}g_{\mu\bar{\beta}}\right)\notag \\
&+(n+1)(n+2)\left(\frac{u}{\sqrt[n]{a^n+u^n}}\right)^{2n-2}\frac{\bar{z}_\mu z_\nu \bar{z}_\alpha z_\beta}{u^2}\Bigg).
\label{eq:EHRiemannTensor}
\end{align}

and the Kretschmann scalar (norm of the curvature tensor) is

\begin{equation}
K(z)\coloneqq R_{\mu\bar{\nu}\alpha\bar{\beta}}R^{\bar{\mu}\nu \bar{\alpha}\beta}(z)=\frac{a^{2n}}{(a^n+u^n)^{2\left(\frac{n+1}{n}\right)}} n(n+2)(n^2-1).
\label{eq:EHKretschmann}
\end{equation}
or, for $n=2$,
\begin{equation}
K(z)=\frac{24a^4}{(a^2+u^2)^3}.
\label{eq:EH-Scalar}
\end{equation}

\end{Prop}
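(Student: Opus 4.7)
The metric \eqref{eq:GeneralEHMetric} is $U(n)$-invariant and comes from the K\"ahler potential $f(u)$ of Theorem~\ref{Thm:EHMetric}. Writing $A(u) = f'(u) = u^{-1}(u^n+a^n)^{1/n}$ and $B(u) = f''(u) = -a^n u^{-2}(u^n+a^n)^{(1-n)/n}$, and using $\partial_\mu u = \bar z_\mu$, $\partial_{\bar\nu} u = z_\nu$, $\partial_\mu \partial_{\bar\nu} u = \delta_{\mu\bar\nu}$, one obtains the decomposition
\[ g_{\mu\bar\nu} = A\,\delta_{\mu\bar\nu} + B\,\bar z_\mu z_\nu. \]
The algebraic identity $A+Bu = u^{n-1}(u^n+a^n)^{(1-n)/n}$, equivalent to the Ricci-flatness condition $(A+Bu)A^{n-1}=1$, will repeatedly make the formulas collapse. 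Sherman--Morrison inversion then gives
\[ g^{\bar\sigma\lambda} = A^{-1}\delta^{\bar\sigma\lambda} + \frac{a^n}{u^n(u^n+a^n)^{1/n}}\, z^\lambda \bar z^{\bar\sigma}. \]

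Next, differentiation yields $\partial_\mu g_{\alpha\bar\sigma} = B(\bar z_\mu \delta_{\alpha\bar\sigma} + \bar z_\alpha \delta_{\mu\bar\sigma}) + B'\,\bar z_\mu \bar z_\alpha z_\sigma$. Contracting with $g^{\lambda\bar\sigma}$ and using $\bar z^{\bar\sigma} z_\sigma = u$ groups the result into the two tensor types $\bar z_\mu \delta^\lambda_\alpha + \bar z_\alpha \delta^\lambda_\mu$ and $z^\lambda \bar z_\mu \bar z_\alpha$. The coefficient of the first is $B/A$, which simplifies to $-a^n/(u(u^n+a^n))$. The coefficient of the second, after combining the rank-one piece of $g^{-1}$ with the $B'$-term and using $A+Bu$, collapses to $(n+1)a^n/(u^2(u^n+a^n))$. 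This establishes \eqref{eq:GeneralChristoffel}.

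For the Riemann tensor I would use $R_{\mu\bar\nu\alpha\bar\beta} = -g_{\lambda\bar\beta}\,\partial_{\bar\nu}\Gamma^\lambda_{\mu\alpha}$. Applying $\partial_{\bar\nu}$ to \eqref{eq:GeneralChristoffel} produces three kinds of contributions: derivatives of the scalar prefactors (via $\partial_{\bar\nu} u = z_\nu$), the actions $\partial_{\bar\nu}\bar z_\mu = \delta_{\mu\bar\nu}$ and $\partial_{\bar\nu}\bar z_\alpha = \delta_{\alpha\bar\nu}$ on the tensor structure, while $z^\lambda$ is holomorphic and passes through. Contracting against $g_{\lambda\bar\beta} = A\delta_{\lambda\bar\beta} + B\bar z_\lambda z_\beta$ and collecting is then a mechanical matching of coefficients against the $U(n)$-invariant tensor types appearing in \eqref{eq:EHRiemannTensor}: $g_{\alpha\bar\nu}g_{\mu\bar\beta}+g_{\mu\bar\nu}g_{\alpha\bar\beta}$, the four mixed $\bar z\cdot z\cdot g$ contributions, and the quartic $\bar z_\mu z_\nu \bar z_\alpha z_\beta$. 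This structural ansatz both restricts the possible outcomes and provides a useful sanity check.

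Finally, the Kretschmann scalar is most efficiently computed by exploiting $U(n)$-invariance: since $K$ depends only on $u$, one may evaluate at $z = (\sqrt u,0,\ldots,0)$. There the metric is diagonal with eigenvalues $\lambda_1 = A+Bu$ in the radial direction and $\lambda_2 = A$ in the $n-1$ perpendicular directions, and $\bar z_\mu z_\nu/u$ vanishes except at the index pair $(1,\bar 1)$. Enumerating the nonzero components of $R_{\mu\bar\nu\alpha\bar\beta}$ from \eqref{eq:EHRiemannTensor} by the number of "radial" indices, then contracting via the diagonal inverse metric, reduces $K$ to a short combinatorial sum whose total is $n(n+2)(n^2-1)$ times the squared scalar prefactor. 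The main obstacle throughout is bookkeeping the proliferation of index-structured terms in the Christoffel-to-Riemann step; splitting everything into the handful of $U(n)$-equivariant tensor pieces from the outset, and invoking $(A+Bu)A^{n-1}=1$ to eliminate spurious powers of $A$, keeps the computation tractable.
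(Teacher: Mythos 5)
Your proposal is correct and follows essentially the same route as the paper: both compute $\Gamma^\lambda_{\mu\alpha}=g^{\bar\sigma\lambda}\partial_\mu g_{\alpha\bar\sigma}$ for the rotationally symmetric ansatz (your $A=f'$, $B=f''$ are the paper's $e^\psi$ and $-\phi e^\psi/u$), then obtain the curvature by differentiating the Christoffel symbols and lowering the index, and your intermediate coefficients (e.g.\ $B/A=-a^n/(u(u^n+a^n))$ and the collapse of the $\bar z_\mu\bar z_\alpha z^\lambda$ coefficient to $(n+1)a^n/(u^2(u^n+a^n))$) check out. The only methodological difference is evaluating the Kretschmann scalar at $z=(\sqrt{u},0,\dots,0)$ via $U(n)$-invariance, a legitimate and slightly cleaner shortcut where the paper instead contracts the general expression into six types of terms.
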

\begin{Rem}
Again, these are both computed for $n=2$ in \cite{EH}, but we do not know of a reference for the higher dimensional results.
\end{Rem}

The nowhere vanishing, parallel holomorphic volume form is the same as for the flat metric:
\begin{Prop}
\label{Prop:HolomForm}
Let $\eta^{\C}\coloneqq dz^1 \wedge \dots \wedge dz^n$ be the standard holomorphic $n-$form on $\C^n$. This descends to $\left(\C^n\setminus \{0\}\right)/\mu_n$ and $\eta\coloneqq \pi^* \eta^{\C}$ extends to a parallel (with respect to the Ricci-flat metric), nowhere vanishing holomorphic volume form on $\ehn$.
\end{Prop}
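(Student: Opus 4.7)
The plan is to verify three things in sequence: $\mu_n$-invariance of $\eta^\C$, smooth nonvanishing extension across the zero section, and parallelness with respect to the Ricci-flat K\"ahler metric.

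For $\mu_n$-invariance, the action $z\mapsto\zeta z$ with $\zeta^n=1$ sends each $dz^j$ to $\zeta\,dz^j$, so $\eta^\C\mapsto\zeta^n\eta^\C=\eta^\C$. Hence $\eta^\C$ descends to a nowhere vanishing holomorphic $n$-form on $(\C^n\setminus\{0\})/\mu_n$.

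For extension across the zero section, I would invoke Theorem \ref{Thm:EH-Alg-Setup}, which identifies $\ehn$ with $\mathcal{O}_{\C\P^{n-1}}(-1)/\mu_n$. Fix a standard affine trivialization of $\mathcal{O}_{\C\P^{n-1}}(-1)$, say the one with $z^n\neq 0$, coordinates $(w^1,\dots,w^{n-1},\xi)$, and blow-down $z^j=\xi w^j$ for $j<n$, $z^n=\xi$. Substituting $dz^j=w^j d\xi+\xi dw^j$ and $dz^n=d\xi$ into $\eta^\C$, all monomials containing two factors of $d\xi$ vanish, leaving
\[
\pi^*\eta^\C = \pm\,\xi^{n-1}\,d\xi\wedge dw^1\wedge\dots\wedge dw^{n-1}.
\]
The lifted $\mu_n$-action is $(w,\xi)\mapsto(w,\zeta\xi)$, so the fibre coordinate on the quotient $\ehn$ is $y=\xi^n$, giving $dy=n\xi^{n-1}d\xi$. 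The descended form is therefore
\[
\eta = \pm\tfrac{1}{n}\,dy\wedge dw^1\wedge\dots\wedge dw^{n-1},
\]
which is manifestly holomorphic, nonvanishing, and extends smoothly across $\{y=0\}\cong\C\P^{n-1}$. An analogous computation in the remaining affine charts gives the global statement.

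For parallelness, note that in the holomorphic coordinates $z^1,\ldots,z^n$ the pointwise norm $|\eta^\C|^2$ is proportional to $1/\det(g_{\mu\bar\nu})$. The rank-one matrix $M$ with entries $M_{\mu\bar\nu}:=\bar z_\mu z_\nu/u$ satisfies $M^2=M$ and $\tr M=1$; writing the metric \eqref{eq:GeneralEHMetric} as $g=F(I-GM)$ with $F=\sqrt[n]{1+a^n/u^n}$ and $G=a^n/(a^n+u^n)$, the eigenvalues of $g$ are $F$ with multiplicity $n-1$ and $F(1-G)$ with multiplicity one. Hence $\det g=F^n(1-G)\equiv 1$, so $|\eta^\C|^2$ is constant. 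Because $\eta$ is holomorphic and the Chern connection on $K_{\ehn}$ coincides with the Levi-Civita connection (the metric being K\"ahler), the connection $1$-form in the local frame $\eta$ equals $\partial\log h(\eta,\eta)=\partial\log\text{const}=0$, so $\nabla\eta=0$ on $\ehn\setminus\C\P^{n-1}$, and by continuity on all of $\ehn$.

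The main obstacle I anticipate is bookkeeping: carefully translating between the Euclidean coordinates on $\C^n\setminus\{0\}$, the affine blow-up chart on $\mathcal{O}_{\C\P^{n-1}}(-1)$, and the fibre coordinate $y=\xi^n$ on $\ehn$, and keeping signs straight. A pleasing by-product of the determinant computation is that Calabi's normalization corresponds precisely to the complex Monge-Amp\`ere equation $\det g=1$, which is the infinitesimal statement of the parallelness being established.
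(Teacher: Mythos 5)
Your proposal is correct. The descent and extension steps follow the paper's route almost exactly: the paper's Proposition \ref{Prop:HolomFormExtends} does the same chart computation, working directly on $\mathcal{O}_{\C\P^{n-1}}(-n)$ with the $n$-th--root description of the blow-down and arriving at $\pi^*\eta^{\C}=\tfrac1n\,d\zeta^1\wedge\dots\wedge dz^i\wedge\dots\wedge d\zeta^n$, which matches your $\pm\tfrac1n\,dy\wedge dw^1\wedge\dots\wedge dw^{n-1}$; your variant of first computing on $\mathcal{O}_{\C\P^{n-1}}(-1)$ and then descending along $y=\xi^n$ is a mild repackaging that avoids fractional powers. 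The only genuine divergence is in the parallelness step. The paper's primary argument is a direct computation with the Christoffel symbols \eqref{eq:GeneralChristoffel}, contracting them against the Levi--Civita symbol and watching the terms cancel; it also sketches an alternative based on $\det g\equiv 1$, but that alternative \emph{presupposes} the existence of a parallel holomorphic $n$-form $\tilde\eta$ (i.e.\ holonomy in $SU(n)$) and then argues $\tilde\eta=f\eta$ with $f$ constant. Your argument is a cleaner, self-contained version of that second route: since $\eta$ is a holomorphic frame of $\mathcal{K}_X$ with constant Hermitian norm (because $\det g=F^n(1-G)\equiv 1$), the Chern connection form $\partial\log h(\eta,\eta)$ vanishes identically, and on a K\"ahler manifold this Chern connection is the one induced by the Levi--Civita connection, so $\nabla\eta=0$ directly --- no pre-existing parallel form is needed. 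What the paper's Christoffel computation buys in exchange is independence from the identification of the induced connection on $\mathcal{K}_X$ with the Chern connection; what yours buys is brevity and the observation, which you rightly highlight, that parallelness is exactly the Monge--Amp\`ere normalization $\det g=1$.
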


\begin{Thm}
\label{Thm:Minimal}
Equip $\mathcal{O}_{\C\P^{n-1}}(-n)$ with the Ricci-flat metric described in Theorem \ref{Thm:EHMetric}. Then any compact, minimal submanifold is contained in $\C\P^{n-1}\subset \mathcal{O}_{\C\P^{n-1}}(-n)$.
\end{Thm}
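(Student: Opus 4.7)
My strategy is to apply the strong maximum principle after producing a function $\phi$ on $\ehn$ that (i) is smooth and non-negative with zero set exactly $\C\P^{n-1}$, and (ii) has real Hessian $\text{Hess}_g\phi$ positive definite on $\ehn\setminus\C\P^{n-1}$. Granted this, minimality of $\Sigma$ yields $\Delta_\Sigma(\phi\vert_\Sigma) = \text{tr}_{T\Sigma}\text{Hess}_g\phi$ (the mean-curvature term drops out), so $\phi\vert_\Sigma$ is strictly subharmonic on $\Sigma\setminus\C\P^{n-1}$. If $\Sigma$ were compact and not contained in $\C\P^{n-1}$, then $\phi\vert_\Sigma$ would attain a positive maximum at some interior point where $u>0$, contradicting strict subharmonicity.

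My candidate is $\phi\coloneqq u^n = \vert z\vert^{2n}$. To verify that this extends smoothly across the zero section, work in a chart on the blow-up of $\C^n$ at the origin with fiber coordinate $\xi_i = z^i$ and base coordinates $\xi_j = z^j/z^i$ for $j\neq i$: one has $u = \vert\xi_i\vert^2(1+\sum_{j\neq i}\vert\xi_j\vert^2)$, the factor $\vert\xi_i\vert^{2n}$ is $\mu_n$-invariant, and in the quotient coordinate $\eta_i \coloneqq \xi_i^n$ on $\ehn\cong Bl_0(\C^n)/\mu_n$ (Theorem \ref{Thm:EH-Alg-Setup}) it reads $\vert\eta_i\vert^2$, hence is smooth. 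Thus $u^n$ is smooth on $\ehn$ and vanishes exactly on $\C\P^{n-1}$.

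The main work is the Hessian calculation. Using the Christoffel symbols \eqref{eq:GeneralChristoffel} in standard $\C^n$-coordinates on $\ehn\setminus\C\P^{n-1}$, a short computation gives
\[(\nabla^2 u^n)_{\mu\nu} = \frac{n(n-1)u^{2n-2}}{a^n+u^n}\bar z_\mu\bar z_\nu,\qquad (\nabla^2 u^n)_{\mu\bar\nu} = n(n-1)u^{n-2}\bar z_\mu z_\nu + nu^{n-1}\delta_{\mu\bar\nu}.\]
For a real tangent vector $v = v^\mu\partial_\mu + \bar v^\mu\partial_{\bar\mu}$, expanding $\text{Hess}(u^n)(v,v)$ and applying $\text{Re}[(\bar z_\mu v^\mu)^2]\geq -\vert\bar z_\mu v^\mu\vert^2$ yields
\[\text{Hess}(u^n)(v,v) \geq \frac{2n(n-1)a^n u^{n-2}}{a^n+u^n}\vert\bar z\cdot v\vert^2 + 2nu^{n-1}\vert v\vert^2_{\mathrm{flat}} > 0\]
whenever $u>0$ and $v\neq 0$. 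I expect the sign-control step to be the only real obstacle: the $(2,0)+(0,2)$-piece of $\text{Hess}(u^n)$ is produced entirely by the Christoffel symbols and has indefinite sign, so it must be dominated pointwise by the $(1,1)$-piece $\phi_{\mu\bar\nu}$. This dominance is delivered by the factor $u^n/(a^n+u^n)<1$, a genuine feature of the Calabi-Eguchi-Hanson geometry rather than a formal consequence of plurisubharmonicity, and it is what allows the uniform positivity to propagate through the full real Hessian.
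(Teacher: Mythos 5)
Your proof is correct, and it rests on the same engine as the paper's: the Tsai--Wang second-derivative argument (Lemma \ref{Lem:Hess}), namely that a compact minimal submanifold cannot meet the region where some function has positive definite Hessian. The genuine difference is your choice of function. The paper takes $\varphi$ to be the distance squared to the zero section, which forces it to integrate the radial geodesic equation to get $\psi(u)$ in closed-ish form, compute $\psi'$ and $\psi''$, and then diagonalize the full $2n\times 2n$ real Hessian to exhibit three explicit positive eigenvalues. You instead take $\varphi=u^n$, whose $(1,1)$-part needs no connection at all and whose $(2,0)$-part is produced solely by the contraction $\Gamma^\lambda_{\mu\alpha}\bar z_\lambda=(n-1)\tfrac{a^n}{u(a^n+u^n)}\bar z_\mu\bar z_\alpha$ from \eqref{eq:GeneralChristoffel}; I checked your two displayed Hessian components and they are right, and the estimate $\mathrm{Re}\bigl[(\bar z\cdot v)^2\bigr]\geq -\vert\bar z\cdot v\vert^2$ combined with $1-\tfrac{u^n}{a^n+u^n}=\tfrac{a^n}{a^n+u^n}$ does give strict positivity for $u>0$. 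Two remarks. First, the exponent $n$ matters: for $\varphi=u$ the $(2,0)$-part contributes $-(n-1)\tfrac{a^n}{a^n+u^n}\vert v\vert^2$ in the worst case, which overwhelms the $(1,1)$-part near $u=0$ once $n\geq 3$, so your choice of $u^n$ is not cosmetic. Second, your smoothness check of $u^n$ across the zero section is more than is needed (continuity of $u$ on $\ehn$ plus smoothness on $u>0$ already suffices, since any positive maximum of $\varphi$ on a compact $\Sigma\not\subset\C\P^{n-1}$ occurs where $u>0$), but it does no harm. What the paper's heavier computation buys is the explicit Hessian of the distance function itself, which is reused in spirit in the geodesic analysis of Corollary \ref{Cor:NoClosedEH}; what yours buys is a substantially shorter and more transparent positivity argument.
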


\begin{Cor}
\label{Cor:NoClosedEH}
The only closed geodesics in $\mathcal{O}_{\C\P^{n-1}}(-n)$ are the geodesics in $(\C\P^{n-1},g_{FS})$, all of which are closed, and these are not stable. 
\end{Cor}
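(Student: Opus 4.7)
The plan is to combine Theorem~\ref{Thm:Minimal} with a Synge-type second variation argument. A closed geodesic in $\mathcal{O}_{\C\P^{n-1}}(-n)$ is a compact, one-dimensional minimal submanifold, so by Theorem~\ref{Thm:Minimal} it is contained in the zero section $\C\P^{n-1}$. To identify such a curve with an intrinsic geodesic of $\C\P^{n-1}$, I would first show that the zero section is totally geodesic. The cleanest route is via the fibrewise involution $\sigma\colon y\mapsto -y$ on $\mathcal{O}_{\C\P^{n-1}}(-n)$: it acts holomorphically, preserves $t=\vert y\vert^{2}_{g_{FS}}$ and hence Calabi's K\"{a}hler potential $p^{*}\phi_{FS}+v(t)$, so it preserves the Ricci-flat metric. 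Its fixed locus is exactly $\C\P^{n-1}$, which is therefore totally geodesic. Together with \eqref{eq:InducesFS} and the observation that rescaling a metric does not change its unparametrised geodesics, this identifies the closed geodesics of $\mathcal{O}_{\C\P^{n-1}}(-n)$ with those of $(\C\P^{n-1},g_{FS})$.

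That every geodesic of $(\C\P^{n-1},g_{FS})$ is closed, all with the same length, is the classical statement that complex projective space is a compact rank-one symmetric space; I would cite this rather than reprove it.

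For instability I would apply Synge's lemma inside $\C\P^{n-1}$. This manifold is orientable of even real dimension $2(n-1)$ and has strictly positive sectional curvature, so parallel transport around any closed geodesic $\gamma$ induces an orientation-preserving orthogonal transformation of the odd-dimensional normal bundle of $\gamma$ in $\C\P^{n-1}$, which must therefore fix a unit vector. Parallel translating this vector produces a unit-length vector field $V$ along $\gamma$, tangent to $\C\P^{n-1}$, normal to $\gamma$, and parallel with respect to the Levi-Civita connection of $\C\P^{n-1}$; by total geodesy $V$ is also parallel with respect to the ambient connection. Using $V$ as variation field, the standard second variation of arclength reduces to
\[
\frac{d^{2}L}{ds^{2}}\bigg|_{s=0}=-\int_{\gamma} K(V,T)\,dt,
\]
where $T$ is the unit tangent to $\gamma$ and $K$ denotes the sectional curvature of the plane spanned by $V$ and $T$. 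By the Gauss equation and the vanishing of the second fundamental form, this ambient sectional curvature coincides with the Fubini-Study one, which is strictly positive; hence the integral is positive and the second variation strictly negative, so $\gamma$ is unstable. The only non-routine ingredient is recognising the isometric involution that makes the zero section totally geodesic; everything else is a textbook application of Synge's technique on $(\C\P^{n-1},g_{FS})$.
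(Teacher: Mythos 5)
Your proposal is correct, and it follows the one-line justification the paper itself gives (``geodesics are of course minimal submanifolds, so the corollary follows'' from Theorem~\ref{Thm:Minimal}) rather than the alternative, direct proof the paper then writes out. The differences are worth recording. For the reduction to the zero section, you invoke Theorem~\ref{Thm:Minimal}; the paper instead runs a self-contained maximum-principle argument on $u(t)=\vert\gamma(t)\vert^2_{\C^n}$ along a hypothetical closed geodesic leaving $\C\P^{n-1}$, using the explicit Christoffel symbols \eqref{eq:GeneralChristoffel} to show $\ddot u(T)\geq 0$ at an interior maximum with equality forcing $\dot\gamma(T)=0$. Your route is shorter but inherits the full Hessian computation behind Theorem~\ref{Thm:Minimal}; the paper's route is independent of it. For total geodesy of the zero section, you and the paper use the same fixed-point-set-of-isometries principle, you with the fibrewise involution $y\mapsto -y$ (which indeed preserves $t=\vert y\vert^2_{g_{FS}}$ and hence Calabi's potential), the paper with the induced diagonal $U(1)$-action and an element $\zeta\notin\mu_n$; both are sound. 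For instability, your Synge-type argument --- parallel transport on the odd-rank normal bundle of $\gamma$ inside $\C\P^{n-1}$ fixes a unit vector, and the resulting parallel variation field gives $I(V,V)=-\int_\gamma K(V,T)\,dt<0$, with the Gauss equation and total geodesy transferring this to the ambient index form --- is more detailed and self-contained than the paper's terse remark that every Fubini--Study geodesic lies on some $\S^2\cong\C\P^1$ and is therefore unstable; the underlying mechanism is the same. One small point of care: a closed geodesic is an immersed rather than embedded compact submanifold, so strictly you should either note that Lemma~\ref{Lem:Hess} applies to immersions, or observe that for a curve the argument is immediate (at a maximum of $\varphi\circ\gamma$ one has $\mathrm{Hess}\,\varphi(\dot\gamma,\dot\gamma)\leq 0$, contradicting positive definiteness off the zero section unless $\dot\gamma=0$). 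With that caveat your proof is complete.
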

\begin{Rem}
Theorem \ref{Thm:Minimal} applies the trick of \cite[Theorem 5.3]{MeanCurv}, where Chung-Jun Tsai and Mu-Tao Wang prove a direct analogue of Theorem \ref{Thm:Minimal} for $T\S^n$ with the Stenzel metric, $T^*\C\P^n$ with the Calabi metric\footnote{ $\mathcal{O}_{\C\P^1}(-2)\cong T^*\C\P^1$, and the metrics are the same, but for $n>2$ the spaces considered in this article are different from $T^*\C\P^n$.}, and the total space of certain spinor bundles with the Bryant-Salamon metric. 

The $n=2$ case of the corollary is also covered by \cite{BY73}, which deals with arbitrary hyperk\"{a}hler 4-manifolds. \cite{BY73} applies since \eqref{eq:EH-Scalar} in particular says that $\mathcal{O}_{\C\P^1}(-2)$ with its Ricci-flat K\"{a}hler metric is nowhere flat.
\end{Rem}

 Much of this article is based on the author's PhD thesis \cite[Chapter 4]{PhD}, written under the supervision of Nadine Gro\ss e and Katrin Wendland. It grew out of an investigation of Kummer K3 surfaces, which is a companion paper \cite{KummerPaper}.

\section{Algebraic aspects} 
In this section we set out to prove Theorem \ref{Thm:EH-Alg-Setup}.

\begin{Lem}
\label{Lem:CnmodkAnalytic}
  The quotient spaces $\C^n/\mu_k$ are complex analytic varieties with isolated singularities at the origin, hence they are complex spaces. The subspaces $(\C^n\setminus\{0\})/\mu_k$ are complex manifolds. 
\end{Lem}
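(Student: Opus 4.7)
The plan is to dispose of the two claims in sequence: first use the free action away from the origin to produce a complex manifold quotient, then invoke a general theorem to endow $\C^n/\mu_k$ with a complex-space structure, and finally verify that the origin is genuinely singular.

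The action $(z,\zeta)\mapsto \zeta z$ is linear and hence holomorphic. On $\C^n\setminus\{0\}$ it is free: if $\zeta z = z$ for some $z\neq 0$, then $(\zeta-1)z=0$ forces $\zeta=1$. A finite group acting freely by biholomorphisms automatically acts properly discontinuously, so the standard quotient theorem endows $(\C^n\setminus\{0\})/\mu_k$ with the structure of a complex manifold: charts are given by small open sets $U\subset\C^n\setminus\{0\}$ on which the projection $\pi$ restricts to a biholomorphism onto its image. This proves the second assertion.

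For the first, by Cartan's theorem on finite quotients of complex manifolds (see e.g.\ Grauert-Remmert), $\C^n/\mu_k$ inherits the structure of a normal complex analytic space of dimension $n$, and $\pi\colon\C^n\to\C^n/\mu_k$ becomes a finite holomorphic surjection. Explicitly one realizes it as $\mathrm{Spec}\,\C[z_1,\dots,z_n]^{\mu_k}$; the invariant ring is generated as a $\C$-algebra by the monomials $z^\alpha$ with $|\alpha|=k$ (an invariant polynomial can only involve monomials of degree divisible by $k$, and any such monomial factors as a product of degree-$k$ monomials), of which there are $N=\binom{n+k-1}{k}$. Since $0$ is the unique fixed point, the first part of the proof already shows that every point other than $[0]$ is smooth in $\C^n/\mu_k$.

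It remains to show $[0]$ is genuinely singular for $n,k\geq 2$. I would compute the Zariski cotangent space at $[0]$: the maximal ideal $\mathfrak{m}\subset\C[z]^{\mu_k}$ is generated by the $N$ degree-$k$ monomials, and since there are no invariants in degrees strictly between $k$ and $2k$, the ideal $\mathfrak{m}^2$ sits in degrees $\geq 2k$, so these generators are linearly independent in $\mathfrak{m}/\mathfrak{m}^2$. Hence $\dim_\C \mathfrak{m}/\mathfrak{m}^2 = N>n$ for $n,k\geq 2$, exceeding the complex dimension, so $[0]$ is a singular point. (Equivalently one may invoke Chevalley-Shephard-Todd: a nontrivial scalar matrix $\zeta\cdot I$ fixes only the origin, so $\mu_k$ contains no pseudo-reflections and $\C^n/\mu_k$ fails to be smooth at $[0]$.) The only nontrivial ingredient is Cartan's theorem; the rest is bookkeeping with the invariant ring.
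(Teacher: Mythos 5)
Your proposal is correct, and it diverges from the paper's proof in one substantive place: how the origin is shown to be singular. The paper proves something strictly stronger there, namely that $\C^n/\mu_k$ is not even a \emph{topological} manifold near $[0]$, by observing that any neighbourhood of $[0]$ contains the link $\partial B_\delta(0)/\mu_k$, a lens space of real dimension $2n-1$, which cannot be embedded in codimension one; this is purely topological and makes no use of the ring structure. You instead work algebraically: the invariant ring is generated by the $N=\binom{n+k-1}{k}$ monomials of degree $k$, these survive linearly independently in $\mathfrak{m}/\mathfrak{m}^2$ because there are no invariants in degrees between $k$ and $2k$, and $N>n$ forces the embedding dimension to exceed the dimension, so the local ring is not regular. (The Chevalley--Shephard--Todd remark is a clean one-line substitute, valid since for $n\geq 2$ no nontrivial scalar is a pseudo-reflection.) Both arguments are sound; yours has the advantage of computing the embedding dimension explicitly and generalizing to any non-reflection group, while the paper's yields the stronger topological conclusion and sidesteps commutative algebra. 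For the complex-space structure you cite Cartan's finite-quotient theorem where the paper uses the affine GIT description $Spm(\C[z]^{\mu_k})$ with finite generation of invariants; these are interchangeable here, and your explicit generators are in fact a useful supplement (the paper only writes them out for $n=k=2$ in its Remark \ref{Rem:C2mod2Analytic}). The treatment of $(\C^n\setminus\{0\})/\mu_k$ via the free, properly discontinuous action is the same in both. Do note, as your own argument implicitly does, that the singularity claim requires $n,k\geq 2$ (for $n=1$ every nontrivial element of $\mu_k$ is a pseudo-reflection and the quotient is smooth); the lemma as stated leaves this restriction tacit.
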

\begin{proof}
To see that $\C^n/\mu_k$ is a variety, one can argue as follows. 
For any finite group $G$ acting on an affine variety $X$, i.e. $X=Spm(R)$, we have that $X/G\cong Spm(R^G)$, where $Spm(R)$ has \textit{maximal} ideals as points (as opposed to $Spec(R)$, which has all prime ideals as points), and $R^G$ means the $G$-invariants of $R$. The ring $R^G$ is known to be finitely generated when $G$ is linearly reductive, which all finite groups are. A reference for these facts is for instance \cite[Theorem 5.9]{Mum03} combined with \cite[Corollary 5.17]{Mum03}. In our case, $X=\C^n=Spm(\C[z_1,\dots,z_n])$, so $\C^n/\mu_k\cong Spm(\C[z_1,\dots,z_n]^{\mu_k})$, which is a variety. 

To see that $\C^n/\mu_k$ is not smooth near the origin, notice that it is not even a topological manifold there. A way to see this is as follows. Assume for contradiction that there is some neighbourhood $U\subset \C^n/\mu_k$ containing $0$ such that $U\cong \C^n$. Any neighbourhood containing $0$ also contains the boundary of an orbiball $\partial B_\delta(0)/\mu_k\coloneqq \left\{[(z_1,\dots,z_n)]\in \C^n/\mu_k \, \big\vert \, \vert z\vert_{\C^n}=\delta\right\}$ for some $\delta>0$, which is a lens space of real dimension $2n-1$, hence cannot be embedded in codimension 1. This contradicts the existence of the neighbourhood $U$. 

When the origin is removed from $\C^n$, the $\mu_k$ action is free and proper. One can then cite a well-known result, for instance  \cite[Prop. 2.1.13]{Huybook}, to conclude that $(\C^n\setminus \{0\})/\mu_k$ is a complex manifold.
\end{proof}
\begin{Rem}
\label{Rem:C2mod2Analytic}
For the Eguchi-Hanson case, $k=n=2$, one can of course easily write out some more explicit details as follows. First of all, $\C[u,v]^{\mu_2}\xrightarrow{\cong} \C[x,y,z]/(xy-z^2)$, where the map sends $(u,v)\mapsto (u^2,v^2,uv)$. That the variety $\C^2/\mu_2\cong Spm(\C[x,y,z]/(xy-z^2))$ is singular at the origin can be directly checked using the Jacobi criterion.
\end{Rem}

\begin{Def}[Resolution of Singularities]
Let $X$ be a complex space with singular set $X_{sing}$. A resolution of $X$ is a complex manifold $\tilde{X}$ and a proper, bimeromorphic morphism $P\colon \tilde{X}\to X$ such that $P^{-1}(X_{sing})\subset \tilde{X}$ is a simple normal crossings divisor\footnote{Meaning the irreducible components of the divisor are smooth and intersect transversely, so this is in particular satisfied if $P^{-1}(X_{sing})$ is a complex submanifold.}, and 
\[P\colon \tilde{X}\setminus P^{-1}(X_{sing})\to X_0\]
is a biholomorphism.

A resolution $\tilde{X}\to X$ is \textit{minimal} if any other resolution of singularities $Q\colon Y\to X$, factors through $\tilde{X}$.  
\label{def:Res}
\end{Def}

\begin{Rem}
Resolutions of singularities always exits (see \cite{Hironaka} or, more specific to the analytic setting, \cite[Thm. 2.0.1]{Wlod} and \cite[Appendix]{Abhyankar}), but we will construct one explicitly for $\C^n/\mu_k$.
\end{Rem}
One easily checks that minimal resolutions, if they exist, are unique up to isomorphisms, so we will talk about \textit{the} minimal resolution (if it exists). We need to fix some standard notation.

\begin{Def}
\label{Def:TautLine}
Define the total space of the tautological line bundle over $\C\P^{n-1}$ as
\[\mathcal{O}_{\C\P^{n-1}}(-1)\coloneqq \left\{(z,\ell)\, \big\vert\, z\in \ell\right\}\subset \C^n\times \C\P^{n-1}.\]
The blow-up of the origin of $\C^n$ is defined as $Bl_0(\C^n)\coloneqq \mathcal{O}_{\C\P^{n-1}}(-1)$ with projection map $P\colon B_0(\C^n)\to \C^n$ defined by $P(z,\ell)=z$. 
\end{Def}

Next is a well-known result about extending group actions to the blow-up.
\begin{Lem}
\label{Lem:GActiononCn}
Let $G$ be any group acting on $\C^n$ via a representation $G\to GL(n,\C)$. Then one gets an induced action on $Bl_0(\C^n)$ by $G$ given by $g\cdot (z,\ell)\coloneqq (g\cdot z,g\cdot \ell)$, making the following diagram commute.

\begin{figure}[H]
\centering
\begin{tikzpicture}
  \matrix (m) [matrix of math nodes,row sep=3em,column sep=4em,minimum width=2em]
  {
     Bl_0(\C^n) & Bl_0(\C^n)/G \\
     \C^n & \C^n/G& {} \\};
  \path[-stealth]
    (m-1-1) edge node [below] {$\tilde{Q}$} (m-1-2)
            edge node [left] {$P$} (m-2-1)
    (m-1-2) edge node [right] {$\tilde{P}$}(m-2-2)
    (m-2-1) edge node [above] {$Q$} (m-2-2);
\end{tikzpicture}
\end{figure}


\end{Lem}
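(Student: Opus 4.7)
The plan is to verify three things: that the proposed formula actually lands in $Bl_0(\C^n)$, that it defines a genuine group action by biholomorphisms, and that the resulting square commutes on the nose. None of these should require more than a direct check, so I expect the lemma to be essentially an unpacking of definitions.

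First, well-definedness. Recall $Bl_0(\C^n) \subset \C^n \times \C\P^{n-1}$ consists of incidence pairs $(z,\ell)$ with $z \in \ell$. The representation $G \to GL(n,\C)$ induces a linear action on $\C^n$, which descends to an action on $\C\P^{n-1}$ precisely because linear automorphisms send lines through the origin to lines through the origin. Linearity also forces $z \in \ell \Rightarrow g\cdot z \in g\cdot \ell$, so the formula $g\cdot(z,\ell) \coloneqq (g\cdot z, g\cdot \ell)$ stays inside the incidence locus. The group axioms $e\cdot(z,\ell)=(z,\ell)$ and $(gh)\cdot(z,\ell)=g\cdot(h\cdot(z,\ell))$ follow from the corresponding identities for the actions on $\C^n$ and $\C\P^{n-1}$ separately. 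Since both factor actions are by biholomorphisms, so is the product action on $\C^n\times\C\P^{n-1}$, and hence so is its restriction to the closed complex submanifold $Bl_0(\C^n)$.

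Next, commutativity of the square. Define $\tilde{P}\colon Bl_0(\C^n)/G \to \C^n/G$ on orbits by $\tilde{P}([(z,\ell)]) \coloneqq [z]$; this is well-defined because the $G$-orbit of $(z,\ell)$ projects under $P$ to the $G$-orbit of $z$ by construction of the action. Chasing an arbitrary $(z,\ell)\in Bl_0(\C^n)$ around the square yields
\begin{equation*}
\tilde{P}\circ \tilde{Q}(z,\ell) = \tilde{P}([(z,\ell)]) = [z] = Q(z) = Q\circ P(z,\ell),
\end{equation*}
so the diagram commutes.

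The main (and only) conceptual point worth flagging is that linearity of the $G$-action is what permits the simultaneous and compatible action on both factors $\C^n$ and $\C\P^{n-1}$; were $G$ merely acting by biholomorphisms fixing the origin, one would have to work harder to lift the action to the blow-up. Since the hypothesis of the lemma is a linear representation, this difficulty does not arise, and the lemma reduces to the bookkeeping above.
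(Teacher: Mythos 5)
Your proposal is correct and follows the same route as the paper: the key observation in both is that linearity sends lines through the origin to lines through the origin, so the incidence condition $z\in\ell$ is preserved, and the commutativity of the square is then a direct orbit chase. You simply spell out the routine verifications (group axioms, biholomorphy, well-definedness of $\tilde{P}$) that the paper leaves as ``evident.''
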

\begin{proof}
Since $G$ acts via linear transformations, $G$ will take lines to lines. So if $\ell$ is some line through the origin, then $g\cdot \ell$ is as well. This means we get a well-defined group action
\[G\times Bl_0(\C^n)\to Bl_0(\C^n),\]
\[(g,(z,\ell))\mapsto (g\cdot z,g\cdot \ell).\]
The commutative diagram is then evident. 
\end{proof}
We are finally ready to attack Theorem \ref{Thm:EH-Alg-Setup}.
\begin{proof}[Proof of Theorem \ref{Thm:EH-Alg-Setup}]

The action of $\mu_k$ on $\C^n$ is via linear transformations, so by Lemma \ref{Lem:GActiononCn} it induces an action on $\mathcal{O}_{\C\P^{n-1}}(-1)$ as follows.
\[\mu_k\times \mathcal{O}_{\C\P^{n-1}}(-1)\to \mathcal{O}_{\C\P^{n-1}}(-1)\]
\[(\zeta,(z,\ell))\mapsto (\zeta\cdot z,\zeta \cdot \ell)= (\zeta\cdot z,\ell).\]

From Lemma \ref{Lem:GActiononCn}, we also get the following commutative diagram.

\begin{figure}[H]
\centering
\begin{tikzpicture}
  \matrix (m) [matrix of math nodes,row sep=3em,column sep=4em,minimum width=2em]
  {
     \mathcal{O}_{\C\P^{n-1}}(-1) & \mathcal{O}_{\C\P^{n-1}}(-1)/\mu_k \\
     \C^n & \C^n/\mu_k& {} \\};
  \path[-stealth]
    (m-1-1) edge node [below] {$\tilde{Q}$} (m-1-2)
            edge node [left] {$\pi$} (m-2-1)
    (m-1-2) edge node [right] {$\tilde{\pi}$}(m-2-2)
    (m-2-1) edge node [above] {$Q$} (m-2-2);
\end{tikzpicture}
\end{figure}

The blow-down map is biholomorphisms away from the zero section $\C\P^{n-1}$,  $\pi\colon \mathcal{O}_{\C\P^{n-1}}(-1)\setminus \C\P^{n-1}\xrightarrow{\cong} \C^n\setminus \{0\}$, which gives an isomorphism
\[\tilde{\pi}\colon \left(\mathcal{O}_{\C\P^{n-1}}(-1)\setminus \C\P^{n-1}\right)/\mu_k\xrightarrow{\cong} \left(\C^n\setminus \{0\}\right)/\mu_k.\]
Furthermore, $\mu_k$ acts freely and transitively on $\C^n\setminus \{0\}$, so $\left(\C^n\setminus \{0\}\right)/\mu_k$ is a smooth manifold. It thus remains to show that $\mathcal{O}_{\C\P^{n-1}}(-1)/\mu_k$ is smooth and that $\tilde{\pi}^{-1}(\{0\})$ is a complex submanifold. We will do this by exhibiting an isomorphism
\[\mathcal{O}_{\C\P^{n-1}}(-1)/\mu_k \xrightarrow{\cong} \mathcal{O}_{\C\P^{n-1}}(-k)\]
and showing that $\tilde{\pi }^{-1}(\{0\})\cong \C\P^{n-1}$.

We use the description $\mathcal{O}_{\C\P^{n-1}}(-k)\coloneqq \left\{(z,\mathbf{\xi})\, \Big \vert\, z^i (\xi^j)^k = z^j (\xi^i)^k\right\} \subset \C^n\times \C\P^{n-1}$. We can define a map
\[f\colon \mathcal{O}_{\C\P^{n-1}}(-1)/\mu_k\to \mathcal{O}_{\C\P^{n-1}}(-k)\]
by specifying what happens when restricting both bundles\footnote{The total space $\mathcal{O}_{\C\P^{n-1}}(-k)$ can be thought of as a line bundle over $\C\P^{n-1}$, and that is the view taken in this proof.} to $U_i\coloneqq \{\xi_i\neq 0\}\subset \C\P^{n-1}$. Let $\zeta^j\coloneqq \frac{\xi^j}{\xi^i}$ on $U_i$. Write $([z^1],\dots ,[z^n])$ for a point in $\C^n/\mu_k$. We have
\begin{equation}
\left(\mathcal{O}_{\C\P^{n-1}}(-1)/\mu_k\right)_{\vert U_i}\cong \left\{\left(\zeta^1 [z^i],\zeta^2 [z^i],\dots ,\zeta^{i-1} [z^i], [z^i], \zeta^{i+1}[z^i],\dots \zeta^n [z^i]\right)\, \Big \vert \, z^i, \zeta^j\in \C\right\}
\notag
\end{equation}
and
\begin{equation}
\left(\mathcal{O}_{\C\P^{n-1}}(-k)\right)_{\vert U_i}\cong \left\{\left((\zeta^1)^k z^i,(\zeta^2)^k z^i,\dots ,(\zeta^{i-1})^k z^i, z^i, (\zeta^{i+1})^k z^i,\dots (\zeta^n)^k z^i\right)\,\Big\vert \,  z^i, \zeta^j\in \C\right\}.
\notag
\end{equation}
Define $f_i\colon \left(\mathcal{O}_{\C\P^{n-1}}(-1)/\mu_k\right)_{\vert U_i}\to \left(\mathcal{O}_{\C\P^{n-1}}(-k)\right)_{\vert U_i}$ by 
\begin{equation}
f_i\left(w^1,\dots ,w^n\right)=\left((w^1)^k ,\dots, (w^n)^k\right).
\notag
\end{equation}
The important thing happens at slot $i$, where $[z^i]\mapsto (z^i)^k$, which is clearly surjective, and it is injective since if $z^k=w^k$, then $\exists\, \zeta\in \mu_k$ such that $z=\zeta w\implies [z]=[w]$.

Under $f$, we see that $\{([0],\ell)\in \C^n/\mu_k\times \C\P^{n-1}\}\subset \mathcal{O}_{\C\P^{n-1}}(-1)/\mu_k$ corresponds to $\{(0,\ell)\in \C^n\times \C\P^{n-1}\}\cong \C\P^{n-1}\subset \mathcal{O}_{\C\P^{n-1}}(-k)$. So the fibre over $\{0\}$ is a complex submanifold.

Finally, to prove the fact that $\mathcal{O}_{\C\P^1}(-2)\to \C^2/\mu_2$ is the minimal resolution one can use a neat little criterion, namely Lemma \ref{Lem:TrivialCanonicalMin}, along with the fact that the spaces $\mathcal{O}_{\C\P^{n-1}}(-n)$ have trivial canonical bundle, which follows from the adjunction formula as we show below as Corollary \ref{Cor:Trivial}.

\end{proof}
\begin{Rem}
A couple of things should be pointed out at this point. Firstly, it is important to note that although the blow-down map $P\colon \mathcal{O}_{\C\P^{n-1}}(-1)\to \C^n$ is just a linear projection, the map $\mathcal{O}_{\C\P^{n-1}}(-k)\to \C^n/\mu_k$ is not linear, but behaves instead like a $k$'th root.

Secondly, the $\mu_k$-action on $\mathcal{O}_{\C\P^{n-1}}(-1)$ is not free, so the meaning of $\mathcal{O}_{\C\P^{n-1}}(-1)/\mu_k\cong \mathcal{O}_{\C\P^{n-1}}(-k)$ is that the left hand space is given a holomorphic structure by the identification with the right hand space, which is a complex manifold. This is analogous to identifying $\T^2/\mu_2$ with $\C\P^1$ via the Weierstra\ss{} $\wp$-function, which makes $\T^2/\mu_2$ into a Riemann surface even though the $\mu_2$-action has 4 fixed points.
\end{Rem}

\begin{Lem}
\label{Lem:TrivialCanonicalMin}
Assume $X\to Y$ is a resolution where $X$ is a non-singular, complex surface and $Y$ is potentially singular. If the canonical line bundle is trivial, $\mathcal{K}_X\cong \underline{\C}$, then $X\to Y$ is the minimal resolution.
\end{Lem}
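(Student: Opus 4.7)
The plan is to invoke Castelnuovo's contractibility criterion. Recall that a resolution $P\colon X \to Y$ of a surface singularity is minimal in the sense of Definition \ref{def:Res} if and only if the exceptional divisor $P^{-1}(Y_{sing})$ contains no $(-1)$-curve, that is, no smooth rational curve $C \cong \C\P^1$ with self-intersection $C^2 = -1$. One direction of this equivalence follows from Castelnuovo's theorem, which allows one to smoothly contract any $(-1)$-curve to a point, thereby producing an intermediate resolution through which $P$ factors. The other direction uses the classical fact that any birational morphism between smooth complex surfaces is a composition of point blow-ups, each of which introduces a $(-1)$-curve; combined with the resolution of indeterminacies for bimeromorphic maps between smooth surfaces, this shows that any competing resolution factors through $P$ as soon as no such $(-1)$-curve is present.

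Granting this criterion, the task reduces to ruling out $(-1)$-curves on $X$ under the hypothesis $\mathcal{K}_X \cong \underline{\C}$. For any smooth irreducible curve $C \subset X$, the adjunction formula reads
\[
2g(C) - 2 = K_X \cdot C + C^2.
\]
Since $\mathcal{K}_X$ is trivial, $K_X \cdot C = 0$ for every curve $C$, and hence $C^2 = 2g(C) - 2 \in \{-2,0,2,\dots\}$. In particular $C^2 = -1$ is impossible, so $X$ contains no $(-1)$-curves at all, and \emph{a fortiori} none in the exceptional fiber. By the criterion above, $P\colon X\to Y$ is the minimal resolution.

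The main obstacle is really just the invocation of Castelnuovo's criterion and the birational factorization theorem in the complex analytic (as opposed to projective) category. Both results are classical for projective surfaces and extend to the proper complex analytic setting; for the intended application to $\mathcal{O}_{\C\P^1}(-2)\to \C^2/\mu_2$ the exceptional fiber is the compact zero section $\C\P^1$, and a projective neighbourhood of it is readily available, so the classical statements apply directly. Once these are taken for granted, the adjunction computation above finishes the proof in one line.
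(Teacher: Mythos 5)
Your proof is correct and follows essentially the same route as the paper: reduce minimality to the absence of $(-1)$-curves in $X$, then use $\mathcal{K}_X\cong\underline{\C}$ to force $\mathcal{K}_X\cdot C=0$, which rules them out. The only cosmetic difference is that you exclude $(-1)$-curves via the adjunction formula $2g(C)-2=\mathcal{K}_X\cdot C+C^2$, whereas the paper cites the numerical characterization of $(-1)$-curves ($C^2<0$ and $\mathcal{K}_X\cdot C<0$) from Barth--Hulek--Peters--Van de Ven; these are interchangeable.
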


\begin{proof}
By \cite[(4.3) Lemma, p. 98]{Surfaces} and their definition of a minimal resolution, \cite[p. 106]{Surfaces}, a non-singular, complex surface $X$ is the minimal resolution of $X\to Y$ for some singular, complex surface $Y$ if and only if $X$ does not contain any $(-1)$-curves. By \cite[(2.2) Prop., p. 91]{Surfaces}, a curve $C\subset X$ is a $(-1)$-curve if and only if $C^2<0$ and $(\mathcal{K}_X,C)<0$, both equations meaning intersection pairing. Since $\mathcal{K}_X\cong \underline{\C}$ by assumption, $(\mathcal{K}_X,C)<0$ cannot hold.\footnote{This argument is standard. Indeed, \cite{Kronheimer1} merely remarks that his resolutions of $\C^2/\Gamma$, for finite groups $\Gamma\subset SU(2)$, have $c_1(X)=0$ since they are hyperk\"{a}hler, and thus have to be the minimal resolution.}
\end{proof}

\label{Section:EHMetricProof}
We again need some preliminary results and definitions. We will divide the proof into two parts for readability. Let us first go through a standard argument to see that $\mathcal{O}_{\C\P^{n-1}}(-k)$ cannot admit a Ricci-flat metric if $k\neq n$. The idea is: if $X\coloneqq \mathcal{O}_{\C\P^{n-1}}(-k)$ admits a Ricci-flat metric, then Chern-Weil theory says that $c_1(X)=0$, which means $\mathcal{K}_X=\underline{\C}$, which will contradict the adjunction formula for $\mathcal{K}_X$ using $Y=\C\P^{n-1}\subset X$ as submanifold unless $k=n$. The details are as follows. First recall what the adjunction formula says.

\begin{Prop}
Let $Y\xrightarrow{\iota} X$ be a submanifold of a complex manifold $X$. Let $\mathcal{N}_{Y/X}$ denote the normal bundle of $Y$ in $X$. Let $\mathcal{K}_X$ and $\mathcal{K}_Y$ be the canonical (aka. determinant) line bundles of $X$ and $Y$ respectively. Then the adjunction formula is the statement that 
\begin{equation}
\mathcal{K}_Y\cong \iota^*\left(\mathcal{K}_X\right)\otimes \det\left(\mathcal{N}_{Y/X}\right).
\label{eq:Adjunction}
\end{equation}
\end{Prop}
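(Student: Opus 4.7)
The plan is to derive the adjunction formula from the standard short exact sequence of holomorphic vector bundles on $Y$ that compares the tangent bundle of $Y$, the restricted tangent bundle of $X$, and the normal bundle.

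First I would recall (or take as the definition of $\mathcal{N}_{Y/X}$) the short exact sequence of holomorphic vector bundles
\begin{equation}
0\to TY \to \iota^* TX \to \mathcal{N}_{Y/X}\to 0 \notag
\end{equation}
on $Y$. The first map is the differential of the inclusion and the second is the quotient projection; holomorphicity of the maps follows from $\iota$ being a holomorphic immersion, and exactness holds fibrewise by linear algebra.

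The main step is then to apply the determinant functor. For any short exact sequence $0\to E'\to E \to E''\to 0$ of holomorphic vector bundles one has a canonical isomorphism $\det(E)\cong \det(E')\otimes \det(E'')$; this is a pointwise statement that can be verified by choosing local holomorphic splittings, and the transition functions for $\det(E)$ then become the product of those for $\det(E')$ and $\det(E'')$ (block-triangular determinant). Applying this to the sequence above yields
\begin{equation}
\det(\iota^* TX)\cong \det(TY)\otimes \det(\mathcal{N}_{Y/X}). \notag
\end{equation}

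Finally I would dualize. Since $\mathcal{K}_X = \det(T^*X) = \det(TX)^*$ and $\iota^*$ commutes with duals and with $\det$, one gets $\iota^* \mathcal{K}_X \cong \det(\iota^*TX)^* \cong \mathcal{K}_Y \otimes \det(\mathcal{N}_{Y/X})^*$, and tensoring both sides by $\det(\mathcal{N}_{Y/X})$ gives the claimed formula
\begin{equation}
\mathcal{K}_Y\cong \iota^*(\mathcal{K}_X)\otimes \det(\mathcal{N}_{Y/X}). \notag
\end{equation}

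The only nontrivial ingredient is the multiplicativity of $\det$ on short exact sequences; everything else is formal manipulation with duals and pullbacks, so I do not anticipate a serious obstacle. One subtlety worth flagging in the write-up is that these are isomorphisms of holomorphic line bundles (not merely smooth ones), which is why it is important that the short exact sequence is a sequence of holomorphic bundle maps.
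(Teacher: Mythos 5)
Your proof is correct and is the standard argument: the paper itself gives no proof, deferring to Huybrechts (Prop.\ 2.2.17), and the argument there is exactly yours — take determinants of the normal bundle sequence $0\to TY\to \iota^*TX\to \mathcal{N}_{Y/X}\to 0$ and dualize. No gaps to flag.
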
 
See \cite[Prop. 2.2.17]{Huybook} for a proof.
\begin{Lem}
Let $E\to X$ be a vector bundle and $\iota\colon X\to E$ the inclusion as the zero section. Then
\begin{equation}
\mathcal{N}_{X/E}\cong E.
\notag
\end{equation}
\end{Lem}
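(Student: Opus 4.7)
The plan is to exploit the canonical horizontal/vertical splitting of $TE$ along the zero section. I consider the short exact sequence of holomorphic vector bundles on the total space $E$,
\[
0 \to \mathcal{V} \to TE \xrightarrow{d\pi} \pi^* TX \to 0,
\]
where $\pi\colon E\to X$ is the bundle projection and $\mathcal{V}\coloneqq \ker(d\pi)$ is the vertical subbundle. Because each fiber $E_x$ is itself a complex vector space, the tangent space $T_e E_x$ at any $e\in E_x$ is canonically identified with $E_x$ (the standard identification of the tangent space of a vector space with itself), and this identification is holomorphic in $e$ and intertwines with the fiberwise-linear bundle transition functions. Globally this yields a canonical isomorphism $\mathcal{V}\cong \pi^* E$.

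Next I pull the sequence back along $\iota\colon X\hookrightarrow E$. Since $\pi\circ\iota=\mathrm{id}_X$, the differential $d\iota\colon TX \to \iota^*TE$ is a holomorphic section of $\iota^*(d\pi)$, so the pulled-back sequence splits canonically:
\[
\iota^* TE \cong TX \oplus \iota^*\mathcal{V} \cong TX \oplus E,
\]
using $\iota^*\pi^*E = E$ in the last step. By definition $\mathcal{N}_{X/E}\coloneqq \iota^*TE/d\iota(TX)$, and the splitting identifies this quotient with the complementary $E$-summand, giving the claimed isomorphism $\mathcal{N}_{X/E}\cong E$.

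The only point that requires any thought is the canonical identification $\mathcal{V}\cong \pi^*E$; the patching over different trivializations is automatic once one notes that the transition functions of $E$ are fiberwise-linear, hence their fiberwise differentials agree with themselves. I do not anticipate any genuine obstacle in the argument: this is really an exercise in unwinding the definitions, included here only because it is needed to feed the adjunction formula \eqref{eq:Adjunction} in the subsequent Chern-class computation.
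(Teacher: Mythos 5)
Your argument is correct. Note, however, that the paper gives no proof of this lemma at all: it is stated as a standard fact and fed directly into the adjunction formula, so there is no in-paper argument to compare against. Your derivation — identifying the vertical bundle $\mathcal{V}=\ker(d\pi)$ with $\pi^* E$ via the canonical, fibrewise-linear (hence holomorphic and transition-compatible) isomorphism $T_e E_x\cong E_x$, splitting $\iota^* TE\cong TX\oplus E$ using $d\iota$ as a section of $\iota^*(d\pi)$, and reading off $\mathcal{N}_{X/E}=\iota^* TE/d\iota(TX)\cong E$ — is precisely the standard proof one would supply, and every step checks out.
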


\begin{Cor}
\label{Cor:Trivial}
For $Y=\C\P^{n-1}$ and $X=\mathcal{O}_{\C\P^{n-1}}(-k)$, the adjunction formula, \eqref{eq:Adjunction}, says that
\begin{equation}
\mathcal{O}_{\C\P^{n-1}}(-n)\cong \iota^*\left(\mathcal{K}_X\right)\otimes \mathcal{O}_{\C\P^{n-1}}(-k).
\label{eq:SpecialAdj}
\end{equation}
\end{Cor}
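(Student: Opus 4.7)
The plan is to apply the adjunction formula \eqref{eq:Adjunction} with the submanifold $Y = \C\P^{n-1}$ sitting inside $X = \mathcal{O}_{\C\P^{n-1}}(-k)$ as the zero section. Three ingredients are needed: the canonical bundle of $\C\P^{n-1}$, the normal bundle of the zero section in $X$, and its determinant. All three are standard, so the argument is essentially a substitution.

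First, I would invoke the classical identification $\mathcal{K}_{\C\P^{n-1}} \cong \mathcal{O}_{\C\P^{n-1}}(-n)$, which can be proved via the Euler sequence
\[0 \to \mathcal{O}_{\C\P^{n-1}} \to \mathcal{O}_{\C\P^{n-1}}(1)^{\oplus n} \to T\C\P^{n-1} \to 0,\]
taking determinants to get $\det T\C\P^{n-1} \cong \mathcal{O}_{\C\P^{n-1}}(n)$ and dualizing. Alternatively, one can just cite \cite[Prop. 2.4.3]{Huybook} or similar.

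Next I would use the preceding lemma, which says that the normal bundle of the zero section of any vector bundle $E \to X$ is canonically isomorphic to $E$ itself. Applied to $E = \mathcal{O}_{\C\P^{n-1}}(-k)$, this gives $\mathcal{N}_{\C\P^{n-1}/X} \cong \mathcal{O}_{\C\P^{n-1}}(-k)$. Since this is already a line bundle, the determinant factor in the adjunction formula is just $\mathcal{O}_{\C\P^{n-1}}(-k)$.

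Finally, plugging everything into \eqref{eq:Adjunction} yields
\[\mathcal{O}_{\C\P^{n-1}}(-n) \cong \mathcal{K}_{\C\P^{n-1}} \cong \iota^*(\mathcal{K}_X) \otimes \mathcal{O}_{\C\P^{n-1}}(-k),\]
which is exactly \eqref{eq:SpecialAdj}. There is no real obstacle here; the whole point is that once the adjunction formula is in hand and the two ingredients $\mathcal{K}_{\C\P^{n-1}}$ and $\mathcal{N}_{\C\P^{n-1}/X}$ are identified, the corollary is immediate. The only mildly delicate point is bookkeeping the sign conventions for $\mathcal{O}(\pm k)$, but since everything appears with negative twist under our convention (tautological bundle gets $-1$), no sign confusion arises.
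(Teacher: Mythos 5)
Your proof is correct and follows essentially the same route as the paper: identify $\mathcal{N}_{\C\P^{n-1}/X}\cong\mathcal{O}_{\C\P^{n-1}}(-k)$ via the zero-section lemma, note that a line bundle equals its own determinant, use $\mathcal{K}_{\C\P^{n-1}}\cong\mathcal{O}_{\C\P^{n-1}}(-n)$ (the paper cites the same reference), and substitute into the adjunction formula. The Euler-sequence aside is a nice optional justification but otherwise the arguments coincide.
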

\begin{proof}
The previous lemma says $\mathcal{N}_{Y/X}\cong \mathcal{O}_{\C\P^{n-1}}(-k)$. This is a line-bundle, hence equals its determinant line-bundle; $\det\left(\mathcal{N}_{Y/X}\right)=\mathcal{N}_{Y/X}$. It is a well-known fact that $\mathcal{K}_{\C\P^{n-1}}\cong \mathcal{O}_{\C\P^{n-1}}(-n)$ (see for instance \cite[Prop. 2.4.3]{Huybook}). Putting this into the adjunction formula, \eqref{eq:Adjunction}, gives the conclusion.
\end{proof}

\begin{Prop}
$X\coloneqq \mathcal{O}_{\C\P^{n-1}}(-k)$ does not admit a Ricci-flat metric for $k\neq n$.
\end{Prop}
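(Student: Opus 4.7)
The plan is to argue by contradiction, following exactly the strategy sketched in the paragraph preceding the statement. Suppose $X\coloneqq\mathcal{O}_{\C\P^{n-1}}(-k)$ carries a Ricci-flat K\"{a}hler metric $g$ with Ricci form $\rho_g = -i\partial\bar{\partial}\log\det(g_{\mu\bar{\nu}})$. Since $\rho_g$ is a globally defined, closed real $(1,1)$-form whose de Rham class represents $c_1(X)=-c_1(\mathcal{K}_X)$ (up to the usual factor of $2\pi$), Chern-Weil theory gives $c_1(\mathcal{K}_X)=0$ in $H^2_{dR}(X;\R)$. This step works verbatim in the non-compact setting, because only the local Chern-Weil representative is needed.

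Next I would pull back this Chern class identity along the inclusion $\iota\colon \C\P^{n-1}\hookrightarrow X$ of the zero section. By naturality of Chern classes,
\[c_1(\iota^*\mathcal{K}_X)=\iota^*c_1(\mathcal{K}_X)=0\in H^2(\C\P^{n-1};\R).\]
On the other hand, Corollary \ref{Cor:Trivial} rearranges to the holomorphic identification $\iota^*\mathcal{K}_X\cong \mathcal{O}_{\C\P^{n-1}}(k-n)$, so taking first Chern classes yields $(k-n)[H]=0$, where $[H]$ is the hyperplane class. Since $[H]$ generates $H^2(\C\P^{n-1};\R)\cong\R$ and is nonzero, this forces $k=n$, contradicting $k\neq n$.

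I do not expect any serious obstacle: the argument is essentially three lines once the adjunction calculation of Corollary \ref{Cor:Trivial} is in hand. The only small point to flag is that the statement implicitly concerns K\"{a}hler metrics (as is the convention throughout the article), since the chain $\text{Ric}=0\Rightarrow c_1(X)=0$ in real cohomology uses K\"{a}hlerness to identify $\rho_g/2\pi$ with a Chern-Weil representative of $c_1(X)$. With that understanding, the proof is complete.
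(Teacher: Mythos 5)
Your proof is correct and follows the same overall strategy as the paper: Ricci-flatness of a K\"ahler metric gives $c_1(X)=0$ in real cohomology via Chern--Weil, and this is then played off against the adjunction computation of Corollary \ref{Cor:Trivial}. The one place you diverge is in how the vanishing of $c_1(X)$ is exploited. The paper first upgrades $c_1(X)=0$ to holomorphic triviality of $\mathcal{K}_X$, which requires the extra input $H^1(X,\mathcal{O}_X)=0$ (obtained from the deformation retraction of $X$ onto $\C\P^{n-1}$, together with the exponential sequence), and then contradicts the bundle isomorphism \eqref{eq:SpecialAdj} directly. You instead stay at the level of real cohomology classes, restricting $c_1(\mathcal{K}_X)=0$ to the zero section and comparing with $c_1\left(\mathcal{O}_{\C\P^{n-1}}(k-n)\right)=(k-n)[H]\neq 0$ for $k\neq n$. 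Your route is marginally more economical, since naturality of $c_1$ replaces the $H^1$-vanishing argument; the paper's route yields the slightly stronger intermediate statement that $\mathcal{K}_X$ would have to be holomorphically trivial, which it reuses elsewhere. Your closing caveat about K\"ahlerness is well taken and applies equally to the paper's own argument.
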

\begin{proof}
Assume for contradiction that $X$ carries a Ricci-flat metric. Since $X$ deformation retract onto $\C\P^{n-1}$, we know that $H^1(X;\C)=0$, so also $H^1(X,\mathcal{O}_X)=0$. Then $\mathcal{K}_X\cong \underline{\C}$ as holomorphic line bundles, contradicting \eqref{eq:SpecialAdj} unless $k=n$, since $\mathcal{O}_{\C\P^{n-1}}(-n)\cong \mathcal{O}_{\C\P^{n-1}}(-k)$ if and only if $k=n$.
\end{proof}

We end this section by giving an explicit section of $\mathcal{K}_X\cong \underline{\C}$ when $X=\mathcal{O}_{\C\P^{n-1}}(-n)$.
\begin{Prop}
\label{Prop:HolomFormExtends}
Let $\pi\colon \mathcal{O}_{\C\P^{n-1}}(-n)\to \C^n/\mu_n$ be the resolution described in Theorem \ref{Thm:EH-Alg-Setup}, $n\geq 2$. Let $\eta^{\C}\coloneqq dz^1\wedge dz^2\wedge\dots \wedge dz^n$ denote a nowhere vanishing, holomorphic $n$-form on $\C^n$. Then $\eta^{\C}$ descends to a holomorphic $n$-form (also denoted $\eta^{\C}$) on $\left(\C^n\setminus \{0\}\right)/\mu_n$, and this $n$-form extends to a nowhere vanishing, holomorphic $n$-form $\eta$ on all of $ \mathcal{O}_{\C\P^{n-1}}(-n)$.
\end{Prop}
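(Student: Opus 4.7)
The plan divides into two pieces: first show that $\eta^{\C}$ is $\mu_n$-invariant and thus descends to $(\C^n\setminus\{0\})/\mu_n$, and second verify that its pullback extends holomorphically and without zeros across the exceptional $\C\P^{n-1}\subset \mathcal{O}_{\C\P^{n-1}}(-n)$.

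For the descent: under the action $z\mapsto \zeta z$ of $\zeta\in\mu_n$, each $dz^j$ pulls back to $\zeta\,dz^j$, so $\eta^{\C}$ picks up a factor of $\zeta^n=1$. It is therefore $\mu_n$-invariant and descends to a holomorphic, nowhere vanishing $n$-form on the free quotient $(\C^n\setminus\{0\})/\mu_n$.

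For the extension I would work in the presentation $\mathcal{O}_{\C\P^{n-1}}(-n)\cong \mathcal{O}_{\C\P^{n-1}}(-1)/\mu_n$ from Theorem \ref{Thm:EH-Alg-Setup} rather than the variety description. On the chart $U_i=\{\xi^i\neq 0\}\subset \C\P^{n-1}$, the tautological bundle $\mathcal{O}_{\C\P^{n-1}}(-1)|_{U_i}$ has coordinates $(w^i,\zeta^1,\ldots,\widehat{\zeta^i},\ldots,\zeta^n)$, where $\zeta^j=w^j/w^i$, and the embedding in $\C^n\times U_i$ reads $y^i=w^i$, $y^j=\zeta^j w^i$ for $j\neq i$. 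Using $dw^i\wedge dw^i=0$, a direct expansion of the wedge gives
\[
\eta^{\C}=dy^1\wedge\cdots\wedge dy^n=(w^i)^{n-1}\, dw^i\wedge \bigwedge_{j\neq i} d\zeta^j.
\]
The residual $\mu_n$-action on this chart is $\zeta\cdot(w^i,\zeta^j)=(\zeta w^i,\zeta^j)$, so a holomorphic coordinate on the quotient $\mathcal{O}_{\C\P^{n-1}}(-n)|_{U_i}$ is $z^i\coloneqq (w^i)^n$, giving $dw^i=\tfrac{1}{n}(w^i)^{1-n}\,dz^i$. Substituting, the powers of $w^i$ cancel exactly and
\[
\eta|_{U_i}=\pi^*\eta^{\C}=\frac{1}{n}\, dz^i\wedge \bigwedge_{j\neq i} d\zeta^j,
\]
which is manifestly holomorphic and nowhere vanishing on all of $\mathcal{O}_{\C\P^{n-1}}(-n)|_{U_i}$, including on $\{z^i=0\}\cap U_i\subset \C\P^{n-1}$. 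On overlaps $U_i\cap U_j$ the two local expressions necessarily agree, since they both coincide with $\pi^*\eta^{\C}$ on the dense open subset where $\pi$ is a biholomorphism. Hence they patch to a global holomorphic, nowhere vanishing $n$-form $\eta$.

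The main obstacle is essentially bookkeeping: one must keep the descriptions of $\mathcal{O}_{\C\P^{n-1}}(-n)$ from Theorem \ref{Thm:EH-Alg-Setup} straight and recognise that if one were to run the same calculation with $\mathcal{O}_{\C\P^{n-1}}(-k)$ for $k\neq n$, the pullback would acquire a residual factor $(w^i)^{n-k}$ that vanishes on (or blows up along) the exceptional divisor. The cancellation $(n-1)+(1-n)=0$ is the infinitesimal shadow of the fact singled out by Corollary \ref{Cor:Trivial} and the adjunction formula: $k=n$ is precisely the case in which the canonical bundle is trivial.
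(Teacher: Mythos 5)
Your proof is correct and follows essentially the same route as the paper: the descent is the determinant-one observation $\zeta^n=1$, and the extension is the same chart computation of $\pi^*\eta^{\C}$ over $U_i$, yielding the identical answer $\frac{1}{n}\,dz^i\wedge\bigwedge_{j\neq i}d\zeta^j$. The only (cosmetic, and arguably cleaner) difference is that you factor the computation through the coordinate $w^i$ on $\mathcal{O}_{\C\P^{n-1}}(-1)$ with $z^i=(w^i)^n$, whereas the paper differentiates $\sqrt[n]{z^i}$ directly on the chart of $\mathcal{O}_{\C\P^{n-1}}(-n)$.
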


\begin{proof}
That $\eta^{\C}$ descends to $\left(\C^n\setminus \{0\}\right)/\mu_n$ follows from $\mu_n$ acting as a $SU(n)$-representation.

Let $U_i\coloneqq \left\{(\xi^1\colon \dots \colon \xi^n)\, \Big \vert \, \xi^i\neq 0\right\}\subset \C\P^{n-1}$ for $1\leq i\leq n$. Then, as in the proof of Theorem \ref{Thm:EH-Alg-Setup},
\begin{equation}
\mathcal{O}_{\C\P^{n-1}}(-n)_{\vert U_i}\cong \left\{\left( (\zeta^1)^nz^i,\dots,(\zeta^{i-1})^n z^i,z^i, (\zeta^{i+1})^n z^i,\dots (\zeta^n)^n z^i)\right)\, \Big\vert \, z^i,\zeta^j \in \C\right\},
\notag
\end{equation} 
with $\zeta^k\coloneqq \frac{\xi^k}{\xi^i}$.
The blow-down map $\pi\colon \mathcal{O}_{\C\P^{n-1}}(-n)_{\vert U_i}\to \C^n/\mu_n$ has the description
\begin{align}
&\pi\left(\left((\zeta^1)^n z^i,\dots,(\zeta^{i-1})^n z^i,z^i, (\zeta^{i+1})^n z^i,\dots (\zeta^n)^n z^i)\right)\right)\notag \\ &=\left[\left(\sqrt[n]{z^i}\zeta^1,\dots \sqrt[n]{z^i} \zeta^{i-1}, \sqrt[n]{z^i}, \sqrt[n]{z^i} \zeta^{i+1}, \dots, \sqrt[n]{z^i} \zeta^{n}\right)\right].
\notag
\end{align}
The pullback of $\eta^{\C}$ by $\pi$ can be described in two steps.
\begin{equation}
\pi^*(dz^k)=\begin{cases} \frac{1}{n} \zeta^k (z^i)^{\frac{1}{n}-1} dz^i+\sqrt[n]{z^i} d \zeta^k & i\neq k\\ \frac{1}{n} (z^i)^{\frac{1}{n}-1} dz^i & i=k,\end{cases}
\notag
\end{equation}
so
\begin{equation}
\pi^*(\eta^{\C})=\bigwedge_{k=1}^n \pi^*(dz^k)=\frac{1}{n} d\zeta^1\wedge d\zeta^2 \wedge \dots d\zeta^{i-1}\wedge dz^i \wedge d\zeta^{i+1}\wedge\dots \wedge d\zeta^n.
\label{eq:PullbackofEta}
\end{equation}
From \eqref{eq:PullbackofEta}, one sees that $\eta=\pi^*(\eta^\C)$ is a nowhere vanishing holomorphic $n$-form on $\mathcal{O}_{\C\P^{n-1}}(-n)_{\vert U_i}$. Since $i$ was arbitrary, this shows that $\eta^\C$ pulls back to a nowhere vanishing holomorphic $n$-form on all of $\mathcal{O}_{\C\P^{n-1}}(-n)$.
\end{proof}

\section{Metric aspects}
Here we prove Theorem \ref{Thm:EHMetric}, Proposition \ref{Prop:EHCurvature} and Theorem \ref{Thm:Minimal}.

It remains to deduce that $\mathcal{O}_{\C\P^{n-1}}(-n)$ indeed possesses the Ricci-flat metric claimed in Theorem \ref{Thm:EHMetric}. We will show how this comes about by starting with the ansatz that one has a rotationally symmetric K\"{a}hler potential.

\begin{Lem}
\label{Lem:RotSymMetric}
Assume a K\"{a}hler metric on $\C^n\setminus\{0\}$ is given by
\[g_{\mu\bar{\nu}}=\partial_\mu \partial_{\bar{\nu}} f,\]
and assume that the K\"{a}hler potential $f$ is $SO(2n)$-symmetric. Then, with $u\coloneqq r^2\coloneqq \sum_{\mu=1}^n z^\mu \bar{z}^\mu $, we have that
\begin{equation}
g_{\mu\bar{\nu}}=\delta_{\mu\nu} f'(u)+\overline{z}_\mu z_\nu f''(u).
\label{eq:SymMetric}
\end{equation}
This may also be written
\begin{equation}
g=\mathds{1}f'(u)+\overline{z}\otimes z f''(u).
\notag
\end{equation}
Furthermore,
\begin{equation}
\det(g)=f'(u)^{n-1}(uf'(u))'.
\label{eq:SymDet}
\end{equation}
\end{Lem}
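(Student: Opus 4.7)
The plan is to reduce the statement to a direct computation using the chain rule, and then handle the determinant with the matrix determinant lemma.

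First I would observe that an $SO(2n)$-symmetric K\"ahler potential on $\C^n\setminus\{0\}$ depends only on the Euclidean radius squared, i.e.\ $f=f(u)$ with $u=\sum_\mu z^\mu \bar z^\mu$. (Strictly one should remark that a smooth $SO(2n)$-invariant function of $(z,\bar z)$ is a smooth function of $u$; this is standard but worth one sentence.) Then I would compute
\begin{equation}
\partial_\mu u = \bar z_\mu, \qquad \partial_{\bar\nu} u = z_\nu, \qquad \partial_{\bar\nu}\bar z_\mu = \delta_{\mu\nu},
\notag
\end{equation}
using the index convention $\bar z_\mu = \bar z^\nu\delta_{\mu\nu}$ fixed in the introduction. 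Applying $\partial_\mu$ gives $\partial_\mu f = f'(u)\,\bar z_\mu$, and applying $\partial_{\bar\nu}$ again and using the product rule produces
\begin{equation}
g_{\mu\bar\nu} = \partial_\mu\partial_{\bar\nu} f = f'(u)\,\delta_{\mu\nu} + f''(u)\,\bar z_\mu z_\nu,
\notag
\end{equation}
which is exactly \eqref{eq:SymMetric}. The coordinate-free rewriting $g = \mathds{1}f'(u)+\bar z\otimes z\,f''(u)$ is then just a change of notation.

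For the determinant formula, the matrix $(g_{\mu\bar\nu})$ has the form $A\,\mathds{1} + B\, \bar z\, z^{\mathsf T}$ with scalars $A=f'(u)$, $B=f''(u)$, which is a rank-one perturbation of a multiple of the identity. I would apply the matrix determinant lemma,
\begin{equation}
\det(A\,\mathds{1}+B\,\bar z z^{\mathsf T}) = A^{n-1}\bigl(A + B\, z^{\mathsf T}\bar z\bigr),
\notag
\end{equation}
and note $z^{\mathsf T}\bar z = \sum_\mu z_\mu \bar z_\mu = u$. Substituting back yields $\det(g) = f'(u)^{n-1}\bigl(f'(u) + u f''(u)\bigr) = f'(u)^{n-1}\bigl(u f'(u)\bigr)'$, which is \eqref{eq:SymDet}.

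There is no real obstacle here: both assertions are direct chain-rule computations, and the only subtlety is remembering the index-lowering convention and choosing an efficient way to evaluate the determinant of a rank-one update. If one prefers to avoid quoting the matrix determinant lemma, the same answer drops out by choosing an orthonormal basis in which $\bar z$ points along the first coordinate axis, so that $g$ becomes block-diagonal with a $(1{\times}1)$ block $A+Bu$ and an $(n-1)$-fold repeated block $A$.
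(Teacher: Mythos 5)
Your proposal is correct and follows essentially the same route as the paper: a direct chain-rule computation for \eqref{eq:SymMetric}, and the determinant of a rank-one perturbation of a multiple of the identity for \eqref{eq:SymDet}. The paper phrases the latter via the eigenvalues of $B=\frac{f''}{f'}\bar z\otimes z$ (eigenvalue $u f''/f'$ once, $0$ with multiplicity $n-1$), which is exactly your alternative orthonormal-basis argument, and is equivalent to quoting the matrix determinant lemma.
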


\begin{proof}
The first bit is a trivial computation. For the determinant, notice that we may write the determinant as\footnote{Note that $f'(u)$ is an eigenvalue of $g$ which we assume is a metric, hence $f'(u)$ cannot be 0.}
\[\det(g)=f'(u)^n \det\left(\delta_{\mu\bar{\nu}}+\frac{f''(u)}{f'(u)}\overline{z}_\mu z_\nu\right).\]
Define the hermitian $n\times n$ matrix
\[B_{\mu\bar{\nu}}\coloneqq \frac{f''(u)}{f'(u)}\overline{z}_\mu z_\nu,\]
or, equivalently,
\[B=\frac{f''(u)}{f'(u)}\overline{z}\otimes z.\]
The matrix $B$ has eigenvalue $u\frac{f''(u)}{f'(u)}$ with multiplicity 1 and eigenvalue 0 with multiplicity $n-1$, as is seen by choosing eigenvectors parallel and orthogonal (with respect to Euclidean norm on $\C^n$) to any given $z$. From this little discussion, it follows that
\[\det(\mathds{1}+B)=\left(1+u\frac{f''(u)}{f'(u)}\right),\]
and thus
\[\det(g)=f'(u)^n \det(\mathds{1}+B)=f'(u)^n\left(1+u\frac{f''(u)}{f'(u)}\right),\]
which one can of course write
\begin{align}
\det(g)=&f'(u)^n\left(1+u\frac{f''(u)}{f'(u)}\right)\notag \\=&f'(u)^{n-1}\left(f'(u)+uf''(u)\right)\notag \\=&f'(u)^{n-1} \left(uf'(u)\right)'.
\notag
\end{align}
\end{proof}

The requirement that the Ricci-curvature vanishes will determine the function $f$ up to a non-trivial constant. We will explain this with two lemmas.
\begin{Lem}
\label{lem:ConstDet}
Assume we have a K\"{a}hler metric $g$ on $\C^n\setminus \{0\}$ with K\"{a}hler potential $f$, and assume that $f$ is spherically symmetric. Then $g$ is Ricci-flat if and only if the $det(g)$ is constant, which one might assume is 1 after scaling.
\end{Lem}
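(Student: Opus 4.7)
The plan is to apply the standard K\"{a}hler identity
\[
R_{\mu\bar{\nu}}=-\partial_\mu\partial_{\bar{\nu}}\log\det(g_{\alpha\bar{\beta}}),
\]
which reduces Ricci-flatness to the condition that $\log\det(g)$ is pluriharmonic on $\C^n\setminus\{0\}$. The "if" direction is then immediate: if $\det(g)$ is constant, its logarithm is constant and so all its mixed second partials vanish. Normalizing this constant to $1$ costs nothing, since rescaling the complex coordinates $z\mapsto \lambda z$ rescales $\det(g)$ by $|\lambda|^{2n}$ while preserving sphericality of the potential (equivalently, one may add a real constant to $\log\det(g)$ without changing $R_{\mu\bar{\nu}}$).

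For the "only if" direction I would invoke Lemma \ref{Lem:RotSymMetric}, which gives the closed form $\det(g)=f'(u)^{n-1}(uf'(u))'$. In particular $\det(g)$, and hence $h(u)\coloneqq \log\det(g)$, is a smooth function of $u=|z|^2$ alone. The pluriharmonicity condition then reads
\[
\partial_\mu\partial_{\bar{\nu}}h(u)=h'(u)\delta_{\mu\nu}+h''(u)\bar{z}_\mu z_\nu=0
\]
for every pair $(\mu,\nu)$. Picking $\mu\neq \nu$ and a point with $z_\mu z_\nu\neq 0$ (which is possible since $n\geq 2$) forces $h''(u)\equiv 0$, and substituting back on the diagonal forces $h'(u)\equiv 0$. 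Hence $h$ is constant, so $\det(g)$ is a positive constant that we may normalize to $1$.

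There is no real obstacle: the entire content is the Ricci--potential formula combined with the observation that a rotationally symmetric pluriharmonic function on $\C^n\setminus\{0\}$ is constant when $n\geq 2$. The only fussy point worth flagging is the normalization, i.e.\ that the choice of constant in $\det(g)=\mathrm{const}$ is exactly the gauge freedom of the setup (the K\"{a}hler potential $f$ is only defined up to a harmonic function, and Ricci-flatness only sees $\log\det(g)$), which justifies taking the constant to be $1$.
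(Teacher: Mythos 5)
Your proof is correct and follows essentially the same route as the paper: both rest on the identity $R_{\mu\bar{\nu}}=-\partial_\mu\partial_{\bar{\nu}}\log\det(g)$ together with the rotationally symmetric form $\partial_\mu\partial_{\bar{\nu}}h(u)=h'(u)\delta_{\mu\nu}+h''(u)\bar{z}_\mu z_\nu$ of that Hessian. The only cosmetic difference is that the paper extracts $h'\equiv 0$ by noting $h'(u)$ is an eigenvalue of this matrix with multiplicity $n-1$, whereas you read off $h''\equiv 0$ from the off-diagonal entries first; both are valid for $n\geq 2$.
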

\begin{proof}
It's well known that on a K\"{a}hler manifold, one may write the Ricci curvature locally as 
\begin{equation}
R_{\mu\bar{\nu}}=-\partial_\mu \partial_{\bar{\nu}} \ln \left(\det(g)\right).
\notag
\end{equation}
See for instance \cite[Equation 4.63]{Kahler} for this fact. 

Since the potential is assumed to be spherically symmetric, \eqref{eq:SymDet} tells us that the determinant of the metric is spherically symmetric. Hence the Ricci-tensor takes the form
\begin{equation}
R_{\mu\bar{\nu}}=-\left(\delta_{\mu \bar{\nu}}\frac{d}{du} \ln\left(\det(g)\right)+\overline{z}_\mu z_\nu \frac{d^2}{du^2}\left(\ln\left(\det(g)\right)\right)\right).
\label{eq:RotSymRicci}
\end{equation}
One direction is clear; if $\det(g)$ is constant, the Ricci curvature vanishes.

For the converse, observe that \eqref{eq:RotSymRicci} has eigenvalue $\frac{d}{du} \ln\left(\det(g)\right)$ (with multiplicity $(n-1)$), hence $R_{\mu\bar{\nu}}=0$ implies $\frac{d}{du} \ln\left(\det(g)\right)=0$.

\end{proof}

\begin{Lem}[{\cite{Cal79}}]
\label{Lem:Ricci-FlatMetric}
Assume, as before, that we have a K\"{a}hler metric $g$ on $\C^n\setminus \{0\}$ with K\"{a}hler potential $f$, and assume that $f$ is spherically symmetric. Assume in addition that the metric is Ricci-flat and non-flat. Then the metric takes the form
\begin{equation}
g_{\mu\bar{\nu}}=\left(1+\left(\frac{a}{u}\right)^n\right)^{\frac{1}{n}}\left(\delta_{\mu\bar{\nu}}-\frac{a^n}{a^n+u^n}\frac{\bar{z}_\mu z_\nu}{u}\right)
\label{eq:Ricci-FlatMetric}
\end{equation}
for some constant $a>0$.
In terms of the potential $f$, we have
\begin{equation}
f'(u)=\left(1+\left(\frac{a}{u}\right)^n\right)^{\frac{1}{n}},
\label{eq:PotDer}
\end{equation}
which integrates to
\begin{equation}
f(u)=\sqrt[n]{a^n+u^n}+\frac{a}{n}\sum_{j=0}^{n-1} \zeta^j \log\left( \sqrt[n]{1+\frac{u^n}{a^n}}-\zeta^j\right)+const.
\label{eq:Calabi-Pot}
\end{equation}
for some primitive $n$'th root of unity $\zeta$.
\end{Lem}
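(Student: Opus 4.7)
By Lemma \ref{lem:ConstDet}, Ricci-flatness of the metric \eqref{eq:SymMetric} is equivalent to $\det(g)$ being constant, and after a normalization I may assume $\det(g)=1$. Combining this with the formula \eqref{eq:SymDet} reduces the problem to the single ODE
\[
f'(u)^{n-1}\bigl(uf'(u)\bigr)' = 1.
\]
The substitution $h(u) \coloneqq u\,f'(u)$ transforms this into $h^{n-1} h' = u^{n-1}$, that is $\tfrac{d}{du}(h^n) = \tfrac{d}{du}(u^n)$, which integrates at once to $h^n = u^n + C$ for some real constant $C$. The flat Euclidean metric corresponds to $C=0$, so the non-flat assumption forces $C\neq 0$, and reality of $h$ on $\{u>0\}$ forces $C > 0$. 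Writing $C = a^n$ with $a > 0$ gives $f'(u) = u^{-1}(u^n + a^n)^{1/n}$, which is precisely \eqref{eq:PotDer}.

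To obtain the closed form \eqref{eq:Ricci-FlatMetric} I would differentiate \eqref{eq:PotDer} once more to get
\[
f''(u) = -\frac{a^n}{u^2\,(u^n+a^n)^{(n-1)/n}},
\]
where the simplification relies on the identity $u^n-(u^n+a^n)=-a^n$. Substituting $f'$ and $f''$ into \eqref{eq:SymMetric} and pulling the common factor $(1+a^n/u^n)^{1/n}$ out front immediately produces \eqref{eq:Ricci-FlatMetric}.

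The only genuine computation lies in integrating $f'$ to obtain the potential \eqref{eq:Calabi-Pot}. I would use the substitution $v \coloneqq (u^n+a^n)^{1/n}$, which yields $du = v^{n-1}u^{1-n}\,dv$ and converts the integral to
\[
\int f'(u)\,du \;=\; \int \frac{v^n}{v^n - a^n}\,dv \;=\; v \;+\; a^n\!\int \frac{dv}{v^n - a^n}.
\]
A residue calculation at the roots $\zeta^j a$ of $v^n - a^n$, using $\zeta^{j(n-1)}=\zeta^{-j}$, gives the partial fraction decomposition
\[
\frac{1}{v^n - a^n} \;=\; \frac{1}{n\,a^{n-1}} \sum_{j=0}^{n-1} \frac{\zeta^j}{v - \zeta^j a}.
\]
Integrating term by term and rewriting $v - \zeta^j a = a\bigl((1+u^n/a^n)^{1/n} - \zeta^j\bigr)$ reproduces \eqref{eq:Calabi-Pot}, once one observes that the additive $\log a$ contributions cancel because $\sum_{j=0}^{n-1}\zeta^j = 0$ for $n\geq 2$, with any remaining integration constant absorbed into the ``const.''\ term.

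The main obstacle is this partial-fraction computation over the $n$-th roots of unity, together with verifying the cancellation of the spurious $\log a$ terms; the ODE reduction itself is a one-line separation of variables, and the metric formula \eqref{eq:Ricci-FlatMetric} then drops out by direct substitution.
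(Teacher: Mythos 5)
Your proposal is correct and follows the paper's route for the ODE reduction: Lemma \ref{lem:ConstDet} together with \eqref{eq:SymDet} gives $f'(u)^{n-1}\bigl(uf'(u)\bigr)'=1$, which integrates to $(uf'(u))^n=u^n+a^n$ exactly as in the paper (your substitution $h=uf'$ is the same step as the paper's multiplication by $u^{n-1}$). The only real difference is in establishing \eqref{eq:Calabi-Pot}: you integrate $f'$ forwards via $v=(u^n+a^n)^{1/n}$ and the partial-fraction expansion of $1/(v^n-a^n)$ over the $n$'th roots of unity, whereas the paper differentiates the claimed closed form and invokes the equivalent identity $\frac{1}{n}\sum_{j=0}^{n-1}\frac{\zeta^j}{\alpha-\zeta^j}=\frac{1}{\alpha^n-1}$; these are mirror images of the same computation, though yours has the mild advantage of deriving the formula rather than verifying it. One small precision: for odd $n$, ruling out $C<0$ requires positivity of $f'$ (so that $g$ is a metric) rather than reality of $h$ alone --- a point the paper itself only addresses in a footnote.
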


\begin{proof}
From Lemma \ref{lem:ConstDet}, our metric is Ricci-flat if and only if, $\det(g)=1$. Inserting this into \eqref{eq:SymDet} gives us the following ODE to solve
\begin{equation}
f'(u)^{n-1}(uf'(u))'=1.
\label{eq:DetODE}
\end{equation}
This is readily solved by multiplying each side by $u^{n-1}$ and integrating in $u$, yielding
\[(uf'(u))^n=u^n+a^n\]
as a first step, where $a^n$ is some constant of integration. This solves\footnote{We remark again that $f'$ is an eigenvalue of $g$, which we want to be a metric. As such, one has to take the real, positive root when solving for $f'(u)$.} for $f'(u)$ as 
\[f'(u)=\left(1+\left(\frac{a}{u}\right)^n\right)^{\frac{1}{n}}.\]

To see that the function $f(u)$ given in \eqref{eq:GeneralEHPotential} is the correct integral of $f'(u)$ requires a little computation.
Introduce $x=\frac{u}{a}$, and observe that $f'(u)=\frac{1}{a}\frac{d}{dx}f(x)$, and that
\begin{equation}
\frac{f(x)}{a}=\sqrt[n]{1+x^n}+\frac{1}{n}\sum_{j=0}^{n-1}\zeta^j \log\left( \sqrt[n]{1+x^n}-\zeta^j\right).
\notag
\end{equation}
The derivative of this is easily computed to be
\begin{equation}
\frac{1}{a}f'(x)=\frac{x^{n-1}\sqrt[n]{x^n+1}}{x^n+1}\left(1+\frac{1}{n}\sum_{j=0}^{n-1} \frac{\zeta^j}{\sqrt[n]{x^n+1}-\zeta^j}\right).
\label{eq:DerivStep}
\end{equation}
To progress further, we need the fact that for any complex number $\alpha$ with $\vert\alpha\vert \neq 1$, and integer $n\geq 
1$, the following identity holds.
\begin{equation}
\frac{1}{n}\sum_{j=0}^{n-1} \frac{\zeta^j}{\alpha-\zeta^j}=\frac{1}{\alpha^n-1}.
\end{equation}
Inserting this with $\alpha\coloneqq \sqrt[n]{1+x^n}$ into \eqref{eq:DerivStep} yields
\[f'(u)=\sqrt[n]{1+x^{-n}}=\sqrt[n]{1+\frac{a^n}{u^n}},\]
which proves the claim.
\end{proof}

For many purposes, the metric is best expressed as a metric on $\C^n\setminus \{0\}$ or its quotient by $\mu_n$. To study the behaviour near the blown-up point, the standard coordinates on $\C^n$ are no longer suitable, and one should instead pull back the metric to $\mathcal{O}_{\C\P^{n-1}}(-n)$ using the blow-down map $\pi$. Using the coordinates from the standard trivialization $\mathcal{O}_{\C\P^{n-1}}(-n)_{\vert U_i}\cong U_i\times \C\cong \C^n$, we get tractable expressions for the metric which both shows that the metric extends across the zero section $\C\P^{n-1}$ and that $g_{\vert \C\P^{n-1}}=a \cdot g_{FS}$. The details are as follows.

\begin{Lem}
\label{Prop:EHInducesFS}
Identify $\left(\C^n\setminus\{0\}\right)/\mu_n$ as $\mathcal{O}_{\C\P^{n-1}}(-n)\setminus \C\P^{n-1}$ (see Theorem \ref{Thm:EH-Alg-Setup}), and give it the Ricci-flat K\"{a}hler metric described as \eqref{eq:GeneralEHMetric},
\begin{equation}
g_{\mu\bar{\nu}}=\left(1+\left(\frac{a}{u}\right)^n\right)^{\frac{1}{n}}\left(\delta_{\mu\bar{\nu}}-\frac{a^n}{a^n+u^n}\frac{\bar{z}_\mu z_\nu}{u}\right).
\notag
\end{equation}
Then this metric extends to all of $\mathcal{O}_{\C\P^{n-1}}(-n)$, where $u=0$ corresponds to the zero-section $\C\P^{n-1}\subset \mathcal{O}_{\C\P^{n-1}}(-n)$, and the Ricci-flat metric induces a metric on $\C\P^{n-1}$ which is precisely the parameter $a$ times the Fubini-Study metric.
\end{Lem}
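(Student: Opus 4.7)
The plan is to work in a trivialization of $\mathcal{O}_{\C\P^{n-1}}(-n)$ over $U_i := \{\xi^i \neq 0\} \subset \C\P^{n-1}$, using local coordinates $(w,\zeta^1,\ldots,\widehat{\zeta^i},\ldots,\zeta^n)$ with $w$ the fibre coordinate and $\zeta^k = \xi^k/\xi^i$. From the formula for the blow-down map $\pi$ given in the proof of Proposition \ref{Prop:HolomFormExtends}, one reads off $u = |w|^{2/n}\sigma$ with $\sigma := 1 + \sum_{k\neq i}|\zeta^k|^2$, so the quantity $u^n = |w|^2\sigma^n$ is already a smooth function on the whole chart; in particular $\{u=0\}$ cuts out exactly the zero section.

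The main structural step is to establish the decomposition
\[f(u) \;=\; a\log u \;+\; G(u^n)\]
for some $G\in C^\infty([0,\infty))$. Starting from \eqref{eq:PotDer}, the difference $f'(u) - a/u = \tfrac{a}{u}\bigl((1+u^n/a^n)^{1/n}-1\bigr)$ is $u^{n-1}$ times a smooth function of $u^n$, which integrates to a smooth function of $u^n$ (the integration constant being absorbed into $G$); alternatively the same fact can be read off \eqref{eq:GeneralEHPotential} by isolating the unique summand in the log-sum that is singular at $u=0$. Pulling this decomposition back to the chart gives
\[\pi^* f \;=\; \tfrac{a}{n}\log(w\bar w) \;+\; a\log\sigma \;+\; G(|w|^2\sigma^n).\]
The first summand is pluriharmonic and so contributes nothing to the K\"ahler form, leaving $\pi^*\omega = i\partial\bar\partial\bigl(a\log\sigma + G(|w|^2\sigma^n)\bigr)$, which is manifestly a smooth $(1,1)$-form on the entire chart. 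This gives the extension across the zero section; positive-definiteness then propagates from $\{w\neq 0\}$ by continuity of eigenvalues and is confirmed by the explicit restriction computation below.

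To identify the induced metric on $\C\P^{n-1}$, observe that every $\partial_{\zeta^k}$ or $\bar\partial_{\zeta^l}$ derivative of $G(|w|^2\sigma^n)$ carries at least one factor of $w$ or $\bar w$, and similarly for the mixed $(w,\bar\zeta^l)$-derivatives. The $(\zeta^k,\bar\zeta^l)$-block of the extended metric at $w=0$ therefore reduces to
\[g_{k\bar l}\big|_{w=0} \;=\; a\,\partial_{\zeta^k}\bar\partial_{\zeta^l}\log\sigma \;=\; a\,(g_{FS})_{k\bar l},\]
since $a\log\sigma$ is precisely $a$ times the standard Fubini-Study potential on $U_i$; this is \eqref{eq:InducesFS}. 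I expect the main obstacle to be verifying the decomposition $f(u) = a\log u + G(u^n)$ with $G$ smooth at the origin; once this structural fact is in hand, the pluriharmonic-subtraction trick together with the vanishing of $G$-derivatives on the zero section does all the remaining work.
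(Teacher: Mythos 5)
Your argument is essentially correct but takes a genuinely different route from the paper. The paper works at the level of the metric tensor: it writes the blow-down map $\pi_i$ explicitly on the chart over $U_i$, computes $\vert dw\vert^2$ and $\overline{w}\cdot dw$ in the coordinates $(z,\zeta)$, and assembles the closed-form expression \eqref{eq:EHPullback} for $\pi_i^*g_{EH}$, from which both the regularity at $z=0$ and the identification of the restriction with $a\,g_{FS}$ are read off at a glance. You instead work at the level of the K\"ahler potential: the decomposition $f(u)=a\log u+G(u^n)$ with $G$ smooth at $0$ is correct (write $(1+x)^{1/n}-1=x\,h(x)$ with $h$ analytic, so $f'(u)-a/u=a^{1-n}u^{n-1}h(u^n/a^n)$ integrates to a smooth function of $u^n$), and since $u^n=\vert w\vert^2\sigma^n$ is smooth on the chart while $\tfrac{a}{n}\log(w\bar w)$ is pluriharmonic off $\{w=0\}$, the form $i\partial\bar\partial\bigl(a\log\sigma+G(\vert w\vert^2\sigma^n)\bigr)$ is a smooth extension of $\pi^*\omega$. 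This is arguably cleaner conceptually and makes the appearance of the Fubini--Study potential $a\log\sigma$ transparent; the price is that the full pulled-back metric (which the paper's formula \eqref{eq:EHPullback} exhibits and which is used later) is not made explicit.

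One point needs tightening: your justification of positive-definiteness at $w=0$ is incomplete as stated. Continuity of eigenvalues from $\{w\neq 0\}$ only yields positive \emph{semi}-definiteness, and your ``explicit restriction computation'' covers only the $(\zeta,\bar\zeta)$-block. Your own observation that the mixed $(w,\bar\zeta)$-entries vanish at $w=0$ makes the matrix block-diagonal there, so what remains is to check the fibre direction: $g_{w\bar w}\big\vert_{w=0}=G'(0)\,\sigma^n$, and from your construction $G'(0)=\tfrac{1}{n^2a^{n-1}}>0$ (matching the coefficient $\tfrac{1}{a^{n-1}n^2}(1+\vert\zeta\vert^2)^n$ of $\vert dz\vert^2$ in the paper's restricted formula). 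With that one line added, the proof is complete.
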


\begin{proof}
Indeed, let $U_i=\{\xi_i\neq 0\}$ as before and let $\zeta_k\coloneqq \frac{\xi_k}{\xi_i}$. We write a point in $\mathcal{O}_{\C\P^{n-1}}(-n)_{\vert U_i} \cong \C^n$ as $(z\zeta_1^n,\dots ,z \zeta_{i-1}^n, z,z \zeta_{i+1}^n ,\dots z \zeta_n^n)$. The blow-down map then takes the form
\[\pi_i((z\zeta_1^n,\dots ,z \zeta_{i-1}^n, z,z \zeta_{i+1}^n ,\dots z \zeta_n^n))=(\sqrt[n]{z}\zeta_1,\dots ,\sqrt[n]{z},\dots,\sqrt[n]{z}\zeta_n).\]
We claim that the Calabi-Eguchi-Hanson metric takes the form
\begin{align}
\pi_i^*g_{EH}&= \left(\frac{1+\vert \zeta\vert^2}{\sqrt[n]{a^n+u^n}}\right)^{n-1}\left(\frac{(1+\vert \zeta\vert^2)}{n^2} \vert dz\vert^2+\vert z\vert^2 \vert d\zeta\vert^2+\frac{1}{n}\sum_k (\overline{z} \zeta_k dz d\overline{\zeta}_k+z\overline{\zeta}_k d\overline{z} d\zeta_k\right)\notag \\
&+\left(\frac{a}{\sqrt[n]{a^n+u^n}}\right)^{n-1} a g_{FS}.
\label{eq:EHPullback}
\end{align}
Here
\[\vert \zeta\vert^2=\sum_{k\neq i} \vert \zeta_k\vert^2,\]
\[\vert d\zeta\vert^2=\sum_k d\zeta_k d\overline{\zeta}_k,\]
and
\[u=\vert z\vert^{2/n}(1+\vert \zeta\vert^2).\]
This metric expression is clearly regular down to $z=0$, in which case it reduces to 
\[\pi_i^*{g_{EH}}_{\vert z=0}=\frac{1}{a^{n-1}n^2}(1+\vert \zeta\vert^2)^n \vert dz\vert^2+ a g_{FS}.\]
To deduce \eqref{eq:EHPullback}, start by writing 
\[w_k\coloneqq \begin{cases} \sqrt[n]{z} \zeta_k & k\neq i\\ \sqrt[n]{z} & k=i.\end{cases}\]
One then finds
\[\vert dw\vert^2 \coloneqq \sum_{k=1}^n \vert dw_k\vert^2=\vert z\vert^{2/n}\left(\frac{(1+\vert \zeta\vert^2)}{n^2 \vert z\vert^2} \vert dz\vert^2 + \vert d\zeta\vert^2 +\frac{1}{n\vert z\vert^2}\sum_{k}\left(\overline{z}\zeta_k dz d\overline{\zeta}_k+z\overline{\zeta}_k d\overline{z} d\zeta_k\right)\right)\]
and
\[\overline{w}\cdot dw\coloneqq \sum_k \overline{w}_k dw_k=\vert z\vert^{2/n}\left(\frac{(1+\vert \zeta\vert^2)}{nz} dz+\sum_k\overline{\zeta}_k d\zeta_k\right).\]
Hence
\begin{align*}
&\vert dw\vert^2-\frac{a^n}{a^n+u^n}\frac{\vert \overline{w}\cdot dw\vert^2}{u} \\
&=\vert z\vert^{2/n}\Bigg(\frac{(1+\vert \zeta\vert^2)}{n^2\vert z\vert^2} \frac{u^n}{a^n+u^n} \vert dz\vert^2+\left(\vert d\zeta\vert^2-\frac{a^n}{a^n+u^n}\frac{\vert \overline{\zeta}\cdot d\zeta\vert^2}{1+\vert \zeta\vert^2}\right)\\
&+\frac{u^n}{n\vert z\vert^2 (a^n+u^n)}\sum_k\left(\left(\overline{z}\zeta_k dz d\overline{\zeta}_k+z\overline{\zeta}_k d\overline{z} d\zeta_k\right)\right)\Bigg).
\end{align*}
We now use $u=\sum_k \vert w_k\vert^2=\vert z\vert^{2/n}(1+\vert \zeta\vert^2)$ and
\[g_{FS}=\frac{(1+\vert \zeta\vert^2)\vert d\zeta\vert^2 -\vert \overline{\zeta}\cdot d\zeta\vert^2}{(1+\vert \zeta\vert^2)^2}\]
to deduce the statement.

\end{proof}

\begin{proof}[Proof of Proposition \ref{Prop:EHCurvature}]
The starting point will be equation \cite[Equation 4.39]{Kahler}), which says that
\begin{equation}
\Gamma_{\alpha\mu}^\lambda=g_{\mu\bar{\nu},\alpha}g^{\bar{\nu}\lambda}.
\notag
\end{equation}
We return to the setting of Lemma \ref{Lem:RotSymMetric} and introduce some shorthand notation,
\begin{equation}
e^{\psi(u)}\coloneqq f'(u)
\notag
\end{equation}
and 
\begin{equation}
\phi(u) \coloneqq -uf''(u)e^{-\psi(u)}.
\notag
\end{equation}
Lemma \ref{Lem:RotSymMetric} tells us that we may write
\begin{equation}
g_{\mu\bar{\nu}}=e^{\psi(u)}\left(\delta_{\mu\bar{\nu}}-\phi(u)\frac{\bar{z}_\mu z_\nu}{u}\right),
\label{eq:gPhiPsi}
\end{equation}
One readily checks that the inverse of $g$ is given as
\begin{equation}
g^{\bar{\nu}\lambda}=e^{-\psi(u)}\left(\delta^{\bar{\nu}\lambda}+\frac{\phi(u)}{1-\phi(u)}\frac{\bar{z}^\nu z^\lambda}{u}\right).
\label{eq:ginvPhiPsi}
\end{equation}

Furthermore, the definition of $e^\psi$ tells us that $\psi'(u)=-\frac{\phi(u)}{u}$. One can thus compute that
\begin{align}
g_{\mu\bar{\nu},\alpha}&=-\frac{\phi(u)}{u}g_{\mu\bar{\nu}} \bar{z}_\alpha+e^{\psi(u)}\left(\frac{\phi(u)-u\phi'(u)}{u^2} \bar{z}_\mu z_\nu\bar{z}_\alpha-\frac{\phi(u)}{u}\delta_{\alpha\bar{\nu}}\bar{z}_\mu\right)\notag \\
&=-\frac{\phi(u)}{u}(g_{\mu\bar{\nu}}\bar{z}_\alpha+g_{\alpha\bar{\nu}}\bar{z}_\mu)+e^{\psi(u)}\frac{\phi(u)(1-\phi(u))- u\phi'(u)}{u}\frac{\bar{z}_\mu z_\nu \bar{z}_\alpha}{u}.
\notag
\end{align}
Multiply this by $g^{\bar{\nu}\lambda}$ and sum over $\nu$, observing that $z_\nu$ is an eigenvector of $g^{\bar{\nu}\lambda}$ with eigenvalue $\frac{e^{-\psi}}{1-\phi}$. This leads to
\begin{equation}
\Gamma^\lambda_{\mu\alpha}=g_{\mu\bar{\nu},\alpha}g^{\bar{\nu}\lambda}=-\frac{\phi(u)}{u}(\delta_{\mu}{}^\lambda \bar{z}_\alpha +\delta_\alpha{}^\lambda \bar{z}_\mu)+\frac{\phi(u)(1-\phi(u))-u\phi'(u)}{u(1-\phi(u))}\cdot\frac{\bar{z}_\mu \bar{z}_\alpha z^\lambda}{u}.
\label{eq:GeneralRotSymChristoffel}
\end{equation}
Equation \eqref{eq:GeneralRotSymChristoffel} holds without assuming that the metric is Ricci-flat. One "only" needs to assume that the K\"{a}hler potential is spherically symmetric\footnote{Two special examples would be $e^\psi=1$, corresponding to the Euclidean metric, or $e^\psi=\frac{a}{a+u}$ and $\phi=\frac{u}{a+u}$, corresponding to the Fubini-Study metric.}. When we in addition assume that the metric is Ricci-flat, we have seen in Lemma \eqref{Lem:Ricci-FlatMetric} that 
\begin{equation}
\phi=\frac{a^n}{a^n+u^n},
\notag
\end{equation}
hence
\begin{equation}
\phi'(u)=-n\frac{u^{n-1}a^n}{(a^n+u^n)^2}=-\frac{n}{u} \phi(u)(1-\phi(u)).
\label{eq:EHPhiDerEq}
\end{equation}
 Inserting this into equation \eqref{eq:GeneralRotSymChristoffel} yields \eqref{eq:GeneralChristoffel}.

To compute the Riemann curvature, we will employ equation \cite[Equation 4.49]{Kahler}, which says that for K\"{a}hler manifolds, the curvature can be written as
\begin{equation}
R^{\lambda}{}_{\mu\bar{\beta}\alpha}=-\frac{\partial \Gamma^\lambda_{\mu\alpha}}{\partial \bar{z}^\beta}.
\notag
\end{equation}
Using this fact, one can differentiate \eqref{eq:GeneralChristoffel}, to find that as a first step,
\begin{align}
R^\lambda{}_{\mu\bar{\beta}\alpha}=& \left(\frac{\phi(u)}{u}\right)'z_\beta\left(\bar{z}_\mu \delta^\lambda{}_\alpha+\bar{z}_\alpha \delta^\lambda{}_\mu -(n+1)\frac{\bar{z}_\alpha\bar{z}_\mu}{u}z^\lambda\right)\notag \\
&+ \frac{\phi(u)}{u}\left(\delta_{\mu\bar{\beta}}\delta^\lambda{}_\alpha+\delta_{\alpha\bar{\beta}}\delta^\lambda{}_\mu-(n+1)\frac{z^\lambda}{u}\left(\delta_{\alpha\bar{\beta}}\bar{z}_\mu+\delta_{\mu\bar{\beta}}\bar{z}_\alpha-\frac{\bar{z}_\mu \bar{z}_\alpha z_\beta}{u}\right)\right).
\notag
\end{align}
Here we are still using the shorthand $\phi=\frac{a^n}{a^n+u^n}$.
Using \eqref{eq:EHPhiDerEq}, we find
\begin{equation}
\left(\frac{\phi(u)}{u}\right)'=\frac{u\phi'(u)-\phi}{u^2}=-n\frac{\phi(1-\phi)}{u^2}-\frac{\phi}{u^2},
\notag
\end{equation}
which allows us to write
\begin{align}
R^\lambda{}_{\mu\bar{\beta}\alpha}=&\frac{\phi(u)}{u}\Bigg(\left(\delta_{\mu\bar{\beta}}-\frac{\bar{z}_\mu z_\beta}{u}\right)\delta^\lambda{}_\alpha+\left(\delta_{\alpha\bar{\beta}}-\frac{\bar{z}_\alpha z_\beta}{u}\right)\delta^\lambda{}_\mu\notag \\
&-(n+1)\left(\frac{\bar{z}_\mu z^\lambda}{u}\delta_{\alpha\bar{\beta}}+\frac{\bar{z}_\alpha z^\lambda}{u}\delta_{\mu\bar{\beta}}-2\frac{\bar{z}_\mu z^\lambda \bar{z}_\alpha z_\beta}{u^2}\right)\notag\\
&-n(1-\phi)\left(\frac{\bar{z}_\mu z_\beta}{u} \delta^\lambda{}_\alpha+\frac{\bar{z}_\alpha z_\beta}{u}\delta^\lambda{}_\mu -(n+1)\frac{\bar{z}_\mu z^\lambda \bar{z}_\alpha z_\beta}{u^2}\right)\Bigg).
\notag
\end{align}
This can be rewritten by observing that $g_{\mu\bar{\nu}}e^{-\psi}=\delta_{\mu\bar{\nu}}-\phi \frac{\bar{z}_\mu z_\nu}{u}$, resulting in 

\begin{align}
R^\lambda{}_{\mu\bar{\beta}\alpha}=&\frac{\phi}{u}\Bigg(e^{-\psi}\left(g_{\mu\bar{\beta}}\delta^\lambda{}_\alpha+g_{\alpha\bar{\beta}}\delta^\lambda{}_\mu\right)\notag \\
&-(n+1)e^{-\psi}\left(\frac{\bar{z}_\mu z^\lambda}{u}g_{\alpha\bar{\beta}}+\frac{\bar{z}_\alpha z^\lambda}{u}g_{\mu\bar{\beta}}\right)\notag\\
&-(n+1)(1-\phi)\left(\frac{\bar{z}_\mu z_\beta}{u} \delta^\lambda{}_\alpha+\frac{\bar{z}_\alpha z_\beta}{u}\delta^\lambda{}_\mu\right)\notag \\
&+(n+1)(n+2)(1-\phi)\frac{\bar{z}_\mu z^\lambda \bar{z}_\alpha z_\beta}{u^2}\Bigg).
\label{eq:RiemannThirdStep}
\end{align}

Finally, we need a small observation. The inverse of the metric, expressed using $\phi$ and $\psi$, is given in equation \eqref{eq:ginvPhiPsi}. Using that equation, and recalling that we raise and lower indices on $z$ using the Euclidean metric in this article, one may write
\begin{equation}
z^\lambda =e^\psi(1-\phi)g^{\bar{\sigma}\lambda} z_\sigma.
\notag
\end{equation}
Inserting this into \eqref{eq:RiemannThirdStep}, we arrive at
\begin{align}
R^\lambda{}_{\mu\bar{\beta}\alpha}&=\frac{\phi}{u}e^{-\psi} g^{\bar{\sigma}\lambda}\Bigg((g_{\alpha\bar{\sigma}}g_{\mu\bar{\beta}}+g_{\mu\bar{\sigma}}g_{\alpha\bar{\beta}})\notag \\
&-e^{\psi}(1-\phi)(n+1)\left(\frac{\bar{z}_\mu z_\beta}{u}g_{\alpha\bar{\sigma}}+\frac{\bar{z}_\alpha z_\beta}{u}g_{\mu\bar{\sigma}}+\frac{\bar{z}_\mu z_\sigma}{u}g_{\alpha\bar{\beta}}+\frac{\bar{z}_\alpha z_\sigma}{u}g_{\mu\bar{\beta}}\right)\notag \\
&+e^{2\psi}(1-\phi)^2(n+1)(n+2)\frac{\bar{z}_\mu z_\sigma \bar{z}_\alpha z_\beta}{u^2}\Bigg).
\notag
\end{align}
This is almost \eqref{eq:EHRiemannTensor}. What remains is to multiply by $g_{\sigma\bar{\nu}}$, sum over $\sigma$, insert $\phi(u)=\frac{a^n}{a^n+u^n}$ and $\exp(\psi)=\sqrt[n]{1+\frac{a^n}{u^n}}$, and observe that $R_{\mu\bar{\nu}\alpha\bar{\beta}}=R_{\bar{\nu}\mu \bar{\beta}\alpha}$.

To compute the Kretschmann scalar, start with \eqref{eq:EHRiemannTensor} and raise the indices using $g$. Then multiply the result with \eqref{eq:EHRiemannTensor} and contract. This gives 6 different kinds of terms. Adding them up gives \eqref{eq:EHKretschmann}.

\end{proof}

\begin{Rem}
As a test, one can check directly by summing over $\lambda=\alpha$ or $\lambda=\mu$ in \eqref{eq:RiemannThirdStep} that the Ricci-tensor vanishes identically, as it should.  
\end{Rem}

\begin{proof}[Proof of Proposition \ref{Prop:HolomForm}]
We have shown in Proposition \ref{Prop:HolomFormExtends} that $\pi^* \eta^\C$ extends to a nowhere vanishing holomorphic form on $\ehn$, so we only need to argue that it is parallel. We may work on $\C^n\setminus \{0\}$ with the metric given by \eqref{eq:GeneralEHMetric} One way to argue is to use the Christoffel symbols \eqref{eq:GeneralChristoffel} and simply compute. Indeed, $\eta^{\C}=\frac{1}{n!} \epsilon_{\mu_1,\dots,\mu_n} dz^{\mu_1} \wedge \dots \wedge dz^{\mu_n}$ where $\epsilon$ denotes the completely anti-symmetric Levi-Civita symbol and $\epsilon_{1,2,\dots,n}=1$. Then
\[\nabla_\alpha \epsilon_{\mu_1,\dots,\mu_n}=-\Gamma^\lambda_{\alpha\mu_1} \epsilon_{\lambda,\mu_2,\dots,\mu_n}-\dots -\Gamma^{\lambda}_{\alpha \mu_n} \epsilon_{\mu_1,\dots,\lambda}.\]
Due to the anti-symmetry, it suffices to compute with $(\mu_1,\dots ,\mu_n)=(1,2,\dots,n)$. We then have
\begin{align*}
&-\Gamma^\lambda_{\alpha\mu_1} \epsilon_{\lambda,\mu_2,\dots,\mu_n}-\dots -\Gamma^{\lambda}_{\alpha \mu_n} \epsilon_{\mu_1,\dots,\lambda}\\
&=\frac{a^n}{u(a^n+u^n)}\left( n\overline{z}_\alpha+(\overline{z}_1 \delta_{\alpha 1}+\dots+\overline{z}_n \delta_{\alpha n})-(n+1)\frac{\overline{z}_\alpha}{u}(\overline{z}_1 z^\lambda \delta_{\lambda 1}+\dots+ \overline{z}^n z^\lambda \delta_{\lambda n}\right)\\
&=\frac{a^n}{u(a^n+u^n)}\left((n+1)\overline{z}_\alpha -(n+1)\overline{z}_\alpha\right)=0.
\end{align*}

An alternative argument is first to observe $\vert \eta^{\C}\vert^2_g = \frac{\det(g)}{n!}=\frac{1}{n!}$ is constant. If $\tilde{\eta}$ is the parallel, holomorphic $n$-form on $\ehn$, then $\tilde{\eta}=f\cdot \eta$ for some holomorphic function $f$. But a parallel form has a constant norm, hence $\vert f\vert$ is constant. So $f$ is a constant and $\eta$ is parallel.
\end{proof}

\subsection{Minimal submanifolds}
Before we prove Theorem \ref{Thm:Minimal}, we need to state a well-known result about minimal submanifolds.
\begin{Lem}[{\cite[Lemma 5.1]{MeanCurv}}]
\label{Lem:Hess}
Let $(M,g)$ be a Riemannian manifold. Suppose that $\varphi$ is a smooth function on $M$ whose Hessian is positive semidefinite. Then any compact, minimal submanifold $K$ of $M$ must be contained in the set where $Hess_M \varphi$ degenerates. In addition, $\varphi$ is constant on connected components of $K$.
\end{Lem}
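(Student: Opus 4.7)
The plan is to study $\varphi|_K$ on the compact minimal submanifold $K$ and exploit the fact that a subharmonic function on a closed manifold must be constant.

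First I would record (or rederive from $\nabla^M_v w=\nabla^K_v w+II(v,w)$, where $II$ is the second fundamental form) the standard identity
\[
\text{Hess}_K(\varphi|_K)(v,w)=\text{Hess}_M\varphi(v,w)+\langle \nabla^M\varphi, II(v,w)\rangle,\qquad v,w\in T_pK.
\]
Tracing over $T_pK$ yields
\[
\Delta_K(\varphi|_K)=\text{tr}_{T_pK}\bigl(\text{Hess}_M\varphi\bigr)+\langle \nabla^M\varphi, H\rangle,
\]
with $H$ the mean curvature vector of $K$ in $M$. Minimality of $K$ means $H\equiv 0$, so $\Delta_K(\varphi|_K)=\text{tr}_{T_pK}(\text{Hess}_M\varphi)$.

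Next I would invoke the hypothesis that $\text{Hess}_M\varphi$ is positive semidefinite: its restriction to any linear subspace is again positive semidefinite, hence has non-negative trace. Therefore $\varphi|_K$ is subharmonic on the compact boundaryless manifold $K$. Integrating $\Delta_K(\varphi|_K)\geq 0$ over $K$ and using Stokes' theorem (equivalently, applying the strong maximum principle) forces $\Delta_K(\varphi|_K)\equiv 0$ and shows that $\varphi|_K$ is constant on each connected component.

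Finally, from $\text{tr}_{T_pK}(\text{Hess}_M\varphi)=0$ together with positive semidefiniteness I conclude that every eigenvalue of $\text{Hess}_M\varphi|_{T_pK}$ vanishes, so $\text{Hess}_M\varphi(v,v)=0$ for all $v\in T_pK$; polarization then gives $\text{Hess}_M\varphi|_{T_pK\times T_pK}=0$, i.e.\ $T_pK$ lies in the kernel of $\text{Hess}_M\varphi$ at every $p\in K$. Hence $K$ is contained in the degeneracy set of $\text{Hess}_M\varphi$, as claimed. The only step requiring any care is setting up the Laplacian–Hessian identity with the correct sign and trace convention; everything else is a direct application of the maximum principle on a closed manifold.
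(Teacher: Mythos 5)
Your argument is correct and complete: the identity $\Delta_K(\varphi|_K)=\mathrm{tr}_{T_pK}(\mathrm{Hess}_M\varphi)+\langle\nabla^M\varphi,H\rangle$, minimality, Stokes on the closed manifold $K$, and positive semidefiniteness forcing the restricted Hessian to vanish is exactly the standard proof, and it is the one given in the cited source \cite[Lemma 5.1]{MeanCurv}. The paper itself does not reproduce a proof (it explicitly defers to that reference), so there is nothing further to compare; the only caveat worth noting is that the degeneracy conclusion is vacuous when $\dim K=0$, which is implicit in the statement as well.
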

We refer to \cite{MeanCurv} for the proof. As Tsai and Wang point out, the lemma is older and not originally due to them, but neither they nor the author know of another reference with a proof.

\begin{proof}[Proof of Theorem \ref{Thm:Minimal}]
As in \cite{MeanCurv}, we choose $\varphi$ to be the distance squared to the zero section $\C\P^{n-1}\subset \mathcal{O}_{\C\P^{n-1}}(-n)$, and by Lemma \ref{Lem:Hess} we just need to show that the Hessian of $\varphi$ is positive definite at any $z\in \left(\C^n\setminus  \{0\}\right)/\mu_n$. We may compute on the $n$-fold cover $\C^n\setminus \{0\}$. Let $\gamma(t)=\alpha(t)z$ be a radial geodesic, where $\alpha(0)=0$, $\alpha(t), \dot{\alpha}(t)>0$ for $t\in (0,1]$, and\footnote{One can in principle find $\alpha$ by solving the equation $\vert \dot{\gamma}\vert^2_g=C^2=const.$, which combines with \eqref{eq:GeneralEHMetric} to yield
$\dot{\alpha}(t)=C\left(\frac{a^n}{\alpha(t)^{2n}}+\vert z\vert_{\C^n}^{2n}\right)^{\frac{n-1}{2n}},$
but we will not need the explicit solution.} $\alpha(1)=1$.  
By \eqref{eq:GeneralEHMetric} we find
\[\vert \dot{\gamma}\vert^2_g=u\dot{\alpha}(t)^2 \left(u\frac{\alpha(t)^2}{\sqrt[n]{\alpha(t)^{2n}u^{n}+ a^n}}\right)^{n-1}\]
and
\begin{align}\sqrt{\varphi}(z)&=\int_0^1 \vert \dot{\gamma}\vert_g\, dt = \sqrt{u}\int_0^1 \dot{\alpha}(t) \left(\frac{u\alpha(t)^2}{\sqrt[n]{\alpha(t)^{2n} u^{n}+ a^n}}\right)^{\frac{n-1}{2}}\, dt \notag \\
&=\sqrt{u}\int_0^1 \left(\frac{u\alpha^2}{\sqrt[n]{\alpha^{2n}u^{n}+ a^n}}\right)^{\frac{n-1}{2}}\, d\alpha \notag \\
&=\frac{\sqrt{a}}{n} \int_0^{\left(\frac{u}{a}\right)^{\frac{n}{2}}} \frac{1}{(\tau^{2}+1)^{\frac{n-1}{2n}}}\, d\tau \notag\\
&\eqqcolon \sqrt{\psi}(u(z)). 
\label{eq:length} 
\end{align}
where we have substituted $\tau=\left(\sqrt{\frac{u}{a}}\alpha\right)^n$. 

We use first compute the components of the Hessian when $\C$-linearly extended to the complexified tangent bundle. Writing $H\coloneqq \nabla d\varphi$, we have 4 block matrices to consider. They read
\begin{equation}
H_{\mu\oa}\coloneqq H\left(\partial_{z^{\mu}},\partial_{\overline{z}^{\alpha}}\right)=\partial_{\mu}\partial_{\oa}\varphi=\delta_{\mu\alpha} \psi'(u)+\overline{z}_\mu z_\alpha \psi''(u),
\label{eq:MixedHessian}
\end{equation}
\[ H_{\mu\alpha}\coloneqq H\left(\partial_{z^{\mu}},\partial_{z^{\alpha}}\right)=\partial_{\mu}\partial_{\alpha}\varphi-\Gamma^{\lambda}_{\mu\alpha} \partial_{\lambda} \varphi=\overline{z}_\mu \overline{z}_\alpha \psi''(u)-\Gamma_{\mu\alpha}^{\lambda} \overline{z}_{\lambda} \psi'(u),\]
\[H_{\oa\mu}\coloneqq H\left(\partial_{\overline{z}^{\alpha}},\partial_{z^{\mu}}\right)=\overline{H_{\alpha\om}},\]
\[H_{\om\oa}\coloneqq H\left(\partial_{\overline{z}^{\mu}},\partial_{\overline{z}^{\alpha}}\right)=\overline{H_{\mu\alpha}}.\]
The Christoffel symbols for the Calabi-Eguchi-Hanson metric \eqref{eq:GeneralEHMetric} are given by \eqref{eq:GeneralChristoffel}. From that equation, we easily find
\[\Gamma_{\mu\alpha}^{\lambda} \overline{z}_{\lambda}=(n-1)\frac{a^n}{u(a^n+u^n)}\overline{z}_\mu \overline{z}_\alpha\eqqcolon \Upsilon(u) \overline{z}_\mu \overline{z}_\alpha.\]
So
\begin{equation}H_{\mu\alpha}=\left(\psi''(u)-\Upsilon(u)\psi'(u)\right)\overline{z}_\mu\overline{z}_{\alpha}.
\label{eq:HolomHessian}
\end{equation}
To translate into the components of the real Hessian, we write $z^\mu=x^\mu +i y^\mu$ and use 
\[\frac{\partial}{\partial z^\mu}=\frac{1}{2}\left(\frac{\partial}{\partial x^\mu}-i\frac{\partial}{\partial y^\mu}\right).\]
to write
\begin{equation}
H^{xx}_{\mu\alpha}\coloneqq H(\partial_{x^\mu},\partial_{x^\alpha})=H(\partial_{z^\mu}+\partial_{\overline{z}^\mu},\partial_{z^\alpha}+\partial_{\overline{z}^\alpha}
)=H_{\mu\alpha}+H_{\om\oa}+H_{\om \alpha}+H_{\mu\oa}.
\label{eq:Hessxx}
\end{equation}
Similarly,
\begin{equation}
H^{xy}_{\mu\alpha}\coloneqq H(\partial_{x^\mu},\partial_{y^\alpha})=H\left(\partial_{z^\mu}+\partial_{\overline{z}^\mu},i\left(\partial_{z^\alpha}-\partial_{\overline{z}^\alpha}\right)
\right)=i(H_{\mu\alpha}-H_{\om\oa})+i(H_{\om \alpha}-H_{\mu\oa}),
\label{eq:Hessxy}
\end{equation}
\begin{equation}
H^{yx}_{\mu\alpha}\coloneqq H(\partial_{y^\mu},\partial_{x^\alpha})=H\left(i\left(\partial_{z^\mu}-\partial_{\overline{z}^\mu}\right),\partial_{z^\alpha}+\partial_{\overline{z}^\alpha}
\right)=i(H_{\mu\alpha}-H_{\om\oa})+i(H_{\mu\oa}-H_{\om \alpha}),
\label{eq:Hessyx}
\end{equation}
and
\begin{equation}
H^{yy}_{\mu\alpha}\coloneqq H(\partial_{y^\mu},\partial_{y^\alpha})=H\left(i\left(\partial_{z^\mu}-\partial_{\overline{z}^\mu}\right),i\left(\partial_{z^\alpha}-\partial_{\overline{z}^\alpha}\right)
\right)=-(H_{\mu\alpha}+H_{\om\oa})+(H_{\om \alpha}+H_{\mu\oa}).
\label{eq:Hessyy}
\end{equation}
The Hessian thus takes the block-matrix form
\[\begin{pmatrix}
H^{xx} & H^{xy}\\ H^{yx} & H^{yy}\end{pmatrix}\]
in these coordinates.

 By \eqref{eq:Hessxx}, \eqref{eq:MixedHessian} and \eqref{eq:HolomHessian}, we find\footnote{Recall that $x\otimes x$ means the matrix with entries $(x\otimes x)_{k\ell}= x_k x_{\ell}$.}
\[H^{xx}=2\psi'\mathds{1}_{n} +2(2\psi''-\Upsilon \psi')x\otimes x+ 2\Upsilon \psi' y\otimes y.\]
Similarly, we have 
\[H^{yy}=2\psi' \mathds{1}_{n}+2\Upsilon\psi' x\otimes x+2(2\psi''-\Upsilon\psi')y\otimes y,\]
\[H^{xy}=2(2\psi''-\Upsilon \psi')x\otimes y-2\Upsilon \psi' y\otimes x,\]
and 
\[H^{yx}=2(2\psi''-\Upsilon \psi')y\otimes x-2\Upsilon \psi' x\otimes y.\]
Writing
\[A\coloneqq 2\frac{\psi''}{\psi'}-\Upsilon\]
\[B\coloneqq \Upsilon,\]
the real Hessian takes the form
\[H=2\psi'\left(\mathds{1}_{2n} +\begin{pmatrix}
A x\otimes x+B y\otimes y & A x\otimes y - B y\otimes x\\ A y\otimes x - B x\otimes y & B x\otimes x + A y\otimes y\end{pmatrix}\right).\]
The eigenvalues of the matrix
\[\begin{pmatrix}
A x\otimes x+B y\otimes y & A x\otimes y - B y\otimes x\\ A y\otimes x - B x\otimes y & B x\otimes x + A y\otimes y\end{pmatrix}\]
are given by 
\begin{itemize}
\item 0 with eigenvector $V=\begin{pmatrix} V_1\\ V_2\end{pmatrix}$ with $V_1,V_2$ orthogonal to both $x$ and $y$;
\item $A(\vert x\vert^2+\vert y\vert^2)=Au$ with eigenvector $\begin{pmatrix} x\\ y\end{pmatrix}$;
\item  $Bu$ with eigenvector $\begin{pmatrix}
y\\ -x
\end{pmatrix}$.
\end{itemize}
So the eigenvalues of the real Hessian $H$ are 
\begin{itemize}
\item $\lambda_1=2\psi'$ with multiplicity $2n-2$;
\item $\lambda_2=2\psi'+2\psi'Au=2\psi'+2u(2\psi''-\Upsilon \psi')$ with multiplicity 1;
\item $ \lambda_3=2\psi'+2\psi'Bu=2\psi'(1+u\Upsilon)$ with multiplicity 1.
\end{itemize}
 We therefore need to check that these three eigenvalues are positive for $u>0$.

From the explicit expression for the length squared \eqref{eq:length} we find
\[\psi'(u)=\frac{\sqrt{\psi}}{\sqrt{u}}\left(\frac{u^n}{a^n+u^n}\right)^{\frac{n-1}{2n}}.\]
From this and the definition of $\Upsilon=(n-1)\frac{a^n}{u(a^n+u^n)}$, the eigenvalues $\lambda_1$ and $\lambda_3$ are positive.
Differentiating again, a straight forward computation gives
\[\psi''(u)=\frac{\psi'}{2\psi}\left(\psi'+\frac{\psi}{u(a^n+u^n)}\left((n-2)a^n-u^n\right)\right).\]
So
\[2\psi''-\Upsilon\psi'=\frac{\psi'}{u\psi}(u\psi'-\psi)=\frac{(\psi')^2}{\psi}-\frac{\psi'}{u}.\]
Hence
$\lambda_2=2\psi' +2(2\psi''-\Upsilon\psi')u=2u\frac{(\psi')^2}{\psi}>0$.

\end{proof}

Geodesics are of course minimal submanifolds, so the corollary follows. For completeness' sake, we offer an alternative, direct proof. The spirit is the same as the above proof.
\begin{proof}[Proof of Corollary \ref{Cor:NoClosedEH}]
Let us first prove that $\C\P^{n-1}$ sits inside $\mathcal{O}_{\C\P^{n-1}}(-n)$ as a totally geodesic submanifold. Consider a diagonal $U(1)$-action on $\C^n$, $(\zeta,z)\mapsto \zeta z$. According to Lemma \ref{Lem:GActiononCn}, we get an induced $U(1)$-action on $\mathcal{O}_{\C\P^{n-1}}(-n)$. This $U(1)$-action is isometric (since it obviously preserves \eqref{eq:GeneralEHMetric} and \eqref{eq:GeneralFSMetric}). For $\zeta\in U(1)$, $\zeta\notin \mu_n$, no point of $\left(\C^n\setminus\{0\}\right)/\mu_n$ is left fixed, but all of $\C\P^{n-1}$ is fixed. So $Fix(\zeta)=\C\P^{n-1}$. and  $\C\P^{n-1}\subset \mathcal{O}_{\C\P^{n-1}}(-n)$ is totally geodesic.

To prove the corollary, assume $\gamma\colon [0,1]\to \mathcal{O}_{\C\P^{n-1}}(-n)$ is a closed geodesic. There are two cases. In the first case $\gamma(t)\in \C\P^{n-1}$ for all $t$ in some interval. Then $\gamma$ is a geodesic on $\C\P^{n-1}$ with the Fubini-Study metric by Theorem \ref{Thm:EHMetric}, and hence remains on $\C\P^{n-1}$ for all time. Then one quotes for instance\footnote{This is according to Klingenberg \cite[p.178]{Kli78} due to Cartan. \cite[Prop. 5.3, p. 356]{Helgason} has a proof.} \cite[Theorem 5.2.1, p. 178]{Kli78} to say that all geodesics (when their domain is extended to all of $\R$) in $\C\P^{n-1}$ with the Fubini-Study metric are closed. The stability statement comes from the fact that any geodesic in $(\C\P^{n-1},g_{FS})$ lies on some $\S^2\cong \C\P^1\subset \C\P^{n-1}$ (this statement is also part of \cite[Theorem 5.2.1, p. 178]{Kli78}), and is therefore not stable.

The second case is that $\gamma([0,1])$ is not contained in $\C\P^{n-1}$. Then there has to be some maximal interval $(t_0,t_1)\subset [0,1]$ with $\gamma(t)\in \left(\C^n\setminus \{0\}\right)/\mu_n$. Let $u(t)\coloneqq \vert \gamma(t)\vert^2_{\C^n}$ for $t\in (t_0,t_1)$. Since $\gamma$ is closed, $u(t)$ has to have a maximum, meaning there is some $T\in(t_0,t_1)$ such that $\dot{u}(T)=0$ and $\ddot{u}(T)\leq 0$. This will lead to a contradiction unless $u(t)$ is constant, as we explain next. On $\left(\C^n\setminus \{0\}\right)/\mu_n$, the Christoffel symbols of the metric are given by \eqref{eq:GeneralChristoffel}. The geodesic equation on a K\"{a}hler manifold reads\footnote{To see this, take the usual geodesic equation and observe that $\Gamma_{\mu\bar{\alpha}}^\lambda=\Gamma_{\bar{\mu}\alpha}^\lambda=0$ for a K\"{a}hler manifold.}
\begin{equation}
\ddot{\gamma}(t)^\lambda+\Gamma^\lambda_{\mu\alpha}\dot{\gamma}(t)^\alpha \dot{\gamma}(t)^\mu=0.
\notag
\end{equation}
In local coordinates with $z(t)\coloneqq \gamma(t)$, we get
\begin{equation}
\ddot{z}^\lambda=\frac{a^n}{a^n+u^n}\left(2\frac{\ip{z}{\dot{z}}}{u}\dot{z}^\lambda -(n+1)\frac{\ip{z}{\dot{z}}^2}{u^2}z^\lambda\right)
\label{eq:EH-Geodesic-Eq}
\end{equation}
upon inserting \eqref{eq:GeneralChristoffel} (the inner products in \eqref{eq:EH-Geodesic-Eq} are all $\C^n$ inner products).

Directly from the definition of $u(t)$, we have
\begin{equation}
\dot{u}(t)=\ip{z}{\dot{z}}(t)+\ip{\dot{z}}{z}(t)=2\text{Re}\left(\ip{z}{\dot{z}}\right)(t),
\notag
\end{equation}
hence $\dot{u}(T)=0$ if and only if $ \ip{z}{\dot{z}}(T)=i\alpha \cdot u(T)$ for some $\alpha\in \R$ (the extra factor of $u(T)$ is just for later convenience). Inserting $\ip{z}{\dot{z}}(T)=i\alpha \cdot u(T)$ into \eqref{eq:EH-Geodesic-Eq} gives us
\begin{equation}
\ddot{z}^\lambda(T)=\frac{a^n}{a^n+u(T)^n}\left(2i\alpha \dot{z}(T)^\lambda+(n+1)\alpha^2 z(T)^\lambda\right).
\label{eq:SpecialGeodesicEq}
\end{equation}

Consider now the second derivative of $u$,
\begin{equation}
\ddot{u}(t)=2\text{Re}\left(\ip{z}{\ddot{z}}\right)(t)+2\vert \dot{z}\vert^2_{\C^n}(t).
\notag
\end{equation}
At the time $T$, we can use the expression for $\ddot{z}(T)$ in \eqref{eq:SpecialGeodesicEq} to deduce
\begin{align}
\ip{z}{\ddot{z}}(T)&=\frac{a^n}{a^n+u(T)^n}\left(2i\alpha \ip{z}{\dot{z}}(T)+(n+1)\alpha^2 u(T)\right)\notag \\
 &=(n-1)\frac{a^n}{a^n+u(T)^n}u(T)\alpha^2,
 \notag
\end{align}
hence
\begin{equation}
\ddot{u}(T)=2(n-1)\frac{a^n}{a^n+u(T)^n}u(T)\alpha^2 +2\vert \dot{z}\vert^2_{\C^n}(T),
\label{eq:uTdder}
\end{equation}
where both terms are non-negative, ensuring $\ddot{u}(T)\geq 0$. Since we're demanding that $\ddot{u}(T)\leq 0$, we deduce $\ddot{u}(T)=0$.  Referring back to \eqref{eq:uTdder}, we see that $\dot{z}(T)=0$. One can now conclude that we have a geodesic with vanishing velocity at a point, hence, by uniqueness of geodesics, we have a constant geodesic. So there are no non-constant, closed geodesic which are not contained in $\C\P^{n-1}$.

\end{proof}

\subsection{Homothety}
The Calabi-Eguchi-Hanson metrics have an interesting scaling property. Let $\alpha>0$ be any fixed number, and consider the homothety $h_{\alpha}^{\C} \colon \C^n\to \C^n$ given by $h_{\alpha}^{\C}(z)=\alpha z$. This clearly fixes the origin and commutes with both the $\mu_k$-action and the blow-up, giving us a map $h_{\alpha}\colon \mathcal{O}_{\C\P^{n-1}}(-k)\to \mathcal{O}_{\C\P^{n-1}}(-k)$.
\begin{Lem}
Let $g_a$ denote the Calabi-Eguchi-Hanson metric on $\mathcal{O}_{\C\P^{n-1}}(-n)$ with parameter $a>0$. Let $\alpha>0$. Then
\[h_{\alpha}^* g_{\alpha^2 a}=\alpha^2 g_{a}.\]
\end{Lem}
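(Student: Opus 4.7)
The plan is to use the K\"{a}hler potential, which makes the scaling behaviour transparent and avoids computing with tensor components. Since the K\"{a}hler form $\omega_a = i\partial\bar\partial f_a$ determines the Hermitian metric $g_a$, it suffices to show
\[
h_\alpha^* \omega_{\alpha^2 a} = \alpha^2 \omega_a
\]
on $(\C^n\setminus\{0\})/\mu_n$, after which the same relation for $g_a$ is automatic.

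First I would note that $h_\alpha$ acts on $u=|z|^2$ by $h_\alpha^* u = \alpha^2 u$, so that $h_\alpha^* f_{\alpha^2 a} = f_{\alpha^2 a}(\alpha^2 u)$ as a function of $u$. The key step is then to verify that
\[
f_{\alpha^2 a}(\alpha^2 u) - \alpha^2 f_a(u)
\]
is constant in $u$. By Lemma \ref{Lem:Ricci-FlatMetric} we have $f_a'(u) = (1+a^n/u^n)^{1/n}$, so a direct computation using the chain rule gives
\[
\frac{d}{du}\bigl(f_{\alpha^2 a}(\alpha^2 u)\bigr) = \alpha^2\, f'_{\alpha^2 a}(\alpha^2 u) = \alpha^2\Bigl(1+\frac{(\alpha^2 a)^n}{(\alpha^2 u)^n}\Bigr)^{1/n} = \alpha^2 f'_a(u),
\]
where the $\alpha^{2n}$ in numerator and denominator cancel. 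Thus the difference of the two potentials is indeed constant.

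Since $i\partial\bar\partial$ annihilates constants, it follows that
\[
h_\alpha^* \omega_{\alpha^2 a} = i\partial\bar\partial\bigl(f_{\alpha^2 a}(\alpha^2 u)\bigr) = i\partial\bar\partial\bigl(\alpha^2 f_a(u)\bigr) = \alpha^2 \omega_a,
\]
which is the desired identity on the open dense set $(\C^n\setminus\{0\})/\mu_n \subset \mathcal{O}_{\C\P^{n-1}}(-n)$. Because $h_\alpha$ is a biholomorphism of $\mathcal{O}_{\C\P^{n-1}}(-n)$ and both sides are smooth K\"{a}hler metrics extending across the zero section $\C\P^{n-1}$ (Lemma \ref{Prop:EHInducesFS}), the identity extends by continuity to all of $\mathcal{O}_{\C\P^{n-1}}(-n)$.

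I do not anticipate a serious obstacle here: the only mild subtlety is bookkeeping with the shift in the parameter $a \leadsto \alpha^2 a$ versus the shift in the argument $u \leadsto \alpha^2 u$, which is precisely arranged so the $\alpha$-factors cancel inside the $n$'th root. As a sanity check one could instead substitute $z\mapsto \alpha z$ directly into the explicit expression \eqref{eq:GeneralEHMetric} for $g_{\alpha^2 a}$ and observe the same cancellation, with a single overall factor of $\alpha^2$ coming from the transformation of $dz^\mu\,d\bar z^\nu$; both routes give the claim.
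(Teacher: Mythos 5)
Your proposal is correct and follows essentially the same route as the paper, which simply observes that the potential \eqref{eq:Calabi-Pot} satisfies $f_{\alpha^2 a}(\alpha z)=\alpha^2 f_a(z)$ (or, alternatively, that one can substitute directly into \eqref{eq:Ricci-FlatMetric}); you have merely fleshed out the potential argument by checking the scaling at the level of $f'$, which is a clean way to handle the additive constant.
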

\begin{proof}
Either observe that the potential \eqref{eq:Calabi-Pot} satisfies $f_{\alpha^2 a}(\alpha z)=\alpha^2 f_a(z)$, or consider how the metric \eqref{eq:Ricci-FlatMetric} changes.
\end{proof}

\end{document}